\numberwithin{equation}{section}
\theoremstyle{plain}
\newtheorem{theorem}{Theorem}
\newtheorem*{theorem*}{Theorem}
\newtheorem{lemma}{Lemma}[section]
\newtheorem{proposition}{Proposition}[section]
\theoremstyle{definition}
\theoremstyle{remark}
\newcommand\supp{\mathop{\rm supp}}
\newcommand\real{\mathop{\rm Re}}
\newcommand\imag{\mathop{\rm Im}}
\newcommand*{\defeq}{\mathrel{\vcenter{\baselineskip0.5ex \lineskiplimit0pt
                     \hbox{\scriptsize.}\hbox{\scriptsize.}}}
                     =}
\title{Local energy decay for Lipschitz wavespeeds}
\author{Jacob Shapiro}
\begin{document}
\begin{abstract}
We prove a logarithmic local energy decay rate for the wave equation with a wavespeed that is a compactly supported Lipschitz perturbation of unity. The key is to establish suitable resolvent estimates at high and low energy for the meromorphic continuation of the cutoff resolvent. The decay rate is the same as that proved by Burq for a smooth perturbation of the Laplacian outside an obstacle.
\end{abstract}
\maketitle 
\author

\section{Introduction} \label{introduction}

\noindent The purpose of this article is to obtain a local energy decay rate for the wave equation \begin{equation} \label{prelim wave}
 \begin{cases} \partial_t^2u -c^2(x)\Delta u = 0 , &(x,t) \in \mathbb{R}^n \times (0, \infty), \\    u(x, 0) = u_0(x), \\ \partial_tu(x,0) = u_1(x), \end{cases}
\end{equation} 
where $\Delta \le 0$ is the Laplacian on $\mathbb{R}^n$, $n \ge 2$. We assume the initial data is compactly supported with $\nabla u_0 \in (H^1(\mathbb{R}^n))^n$ and $u_1 \in H^1(\mathbb{R}^n)$. The wavespeed $c$ is a compactly supported Lipschitz perturbation of unity, see \eqref{basic c assumptions} below.

For $x \in \mathbb{R}^n$ and $R > 0$, set $B(x, R) \defeq \{y \in \mathbb{R}^n : |y - x| < R\}$. We obtain the following logarithmic local energy decay.
\begin{theorem} \label{brief decay}
Assume 
\begin{equation} \label{basic c assumptions}
 c = c(x) >0, \quad  c, c^{-1} \in L^\infty(\mathbb{R}^n), \quad \nabla c \in (L^\infty(\mathbb{R}^n))^n, \quad \supp(c-1) \text{ is compact}.
\end{equation}
Suppose the supports of $u_0$ and $u_1$ are contained in $B(0,R_1)$, and that $\nabla u_0 \in (H^1(\mathbb{R}^n))^n$ and $u_1 \in H^1(\mathbb{R}^n)$.
 Then for any $R_2 > 0$, there exists $C > 0$ such that the solution $u$ to \eqref{prelim wave} satisfies for $t \ge 0$,
\begin{equation} \label{prelim decay}
\left( \int_{B(0, R_2)} |\nabla u|^2 + c^{-2} |\partial_t u|^2 dx \right)^{\frac{1}{2}} \le \frac{C}{\log(2 + t)}\left( \| \nabla u_0 \|_{(H^1(\mathbb{R}^n))^n} + \| u_1\|_{H^1(\mathbb{R}^n)} \right).
\end{equation}
\end{theorem}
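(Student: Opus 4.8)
The plan is to recast \eqref{prelim wave} as a stationary problem. Let $P \defeq -c^2(x)\Delta$, a nonnegative self-adjoint operator on the weighted space $\mathcal{H} \defeq L^2(\mathbb{R}^n; c^{-2}\,dx)$, for which $\langle Pv, v\rangle_{\mathcal{H}} = \|\nabla v\|_{L^2}^2$, so that the quantity on the left of \eqref{prelim decay} is, up to a constant, the $\mathcal{H}$-energy of $u(t)$ restricted to $B(0,R_2)$; by the functional calculus $u(t) = \cos(t\sqrt{P})u_0 + \sin(t\sqrt{P})P^{-1/2}u_1$ and this energy is conserved in $t$. Fix $\chi \in C_c^\infty(\mathbb{R}^n)$ equal to $1$ on a ball containing $\supp(c-1) \cup B(0,R_1) \cup B(0,R_2)$. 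Using Stone's formula (the inverse Laplace--Fourier transform), I would write the localized solution $\chi u(t)$ as a contour integral over a horizontal line just above $\mathbb{R}$ of $e^{\mp i t\lambda}\, \chi R(\lambda)\widetilde\chi\,(\text{data})\, d\lambda$, where $R(\lambda) \defeq (P-\lambda^2)^{-1}$ for $\mathrm{Im}\,\lambda > 0$ and $\widetilde\chi \in C_c^\infty$ equals $1$ on the support of the data. Everything then reduces to controlling the meromorphic continuation of the cutoff resolvent $\chi R(\lambda)\chi$ across and below the real axis. Since $P \ge 0$ is self-adjoint and has no positive eigenvalues (the limiting absorption principle and the absence of embedded eigenvalues holding because unique continuation is available for Lipschitz coefficients), the continuation is holomorphic in $\{\mathrm{Im}\,\lambda \ge 0\} \setminus \{0\}$, with resonances possible only for $\mathrm{Im}\,\lambda < 0$ and possibly at $\lambda = 0$; for even $n$ the continuation lives on the logarithmic cover of $\mathbb{C}\setminus\{0\}$.

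Step 1 (high energy). I would establish the exponential bound $\|\chi R(\lambda)\chi\|_{L^2\to L^2} \le C e^{C|\lambda|}$ for $|\lambda|\ge 1$, $\mathrm{Im}\,\lambda \ge 0$, and then propagate it to a Burq-type resonance-free region: $\chi R(\lambda)\chi$ continues holomorphically to $\{\, |\mathrm{Re}\,\lambda|\ge C,\ \mathrm{Im}\,\lambda > -e^{-C|\mathrm{Re}\,\lambda|}\,\}$ with the same bound there. The bound on the real axis is the crux, and it is where the Lipschitz hypothesis enters: after rescaling to the semiclassical operator $-h^2 c^2\Delta - z$ with $z$ near $1$ and $h = |\lambda|^{-1}$, one proves a global Carleman estimate with a convex radial weight $e^{\varphi(|x|)/h}$ on a large ball containing $\supp(c-1)$; the commutator computation produces a term carrying $\nabla c$, absorbed using only $\nabla c \in L^\infty$, but at the price that $\varphi$ cannot be taken small, so $e^{\varphi/h}\le e^{C/h}$ on the ball, which is the source of the exponential loss. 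Outside the ball one uses the free resolvent $(-\Delta - \lambda^2)^{-1}$ and glues. Propagating the estimate off the real axis is a semiclassical maximum principle (Phragm\'en--Lindel\"of) argument in the style of Tang--Zworski and Burq, using also that $\chi R(\lambda)\chi$ is bounded by $|\mathrm{Im}\,\lambda|^{-1}$ on the upper half-plane.

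Step 2 (low energy) and Step 3 (contour deformation). Near $\lambda = 0$, write $(-c^2\Delta - \lambda^2)^{-1} = (I - R_0(\lambda)W(\lambda))^{-1} R_0(\lambda) c^{-2}$ with $R_0(\lambda) = (-\Delta - \lambda^2)^{-1}$ and $W(\lambda) = \lambda^2(c^{-2}-1)$ compactly supported; since $W(\lambda) = O(\lambda^2)$ vanishes at $\lambda = 0$, the factor $(I - R_0(\lambda)W(\lambda))^{-1}$ is holomorphic near $\lambda = 0$, so $\chi R(\lambda)\chi$ inherits exactly the (integrable, at most logarithmic) zero-energy behavior of the free resolvent, with no resonance at $\lambda = 0$. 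Given these bounds, split $\chi u(t) = \chi \varphi(\sqrt{P}/A) u(t) + \chi(1-\varphi(\sqrt{P}/A)) u(t)$ for a smooth cutoff $\varphi$ and a parameter $A \ge 1$. For the low-frequency piece, deform the contour in the resolvent representation down to the curve $\mathrm{Im}\,\lambda = -\min(\varepsilon_0, e^{-C|\mathrm{Re}\,\lambda|})$ over $|\mathrm{Re}\,\lambda|\le A$, using Step 2 to pass near $\lambda = 0$ and Step 1 elsewhere; there $|e^{\mp i t\lambda}| \le e^{-t e^{-CA}}$ and $\|\chi R(\lambda)\chi\| \le e^{CA}$, so this piece has local energy $\lesssim A\, e^{CA} e^{-t e^{-CA}} (\|\nabla u_0\|_{H^1} + \|u_1\|_{H^1})$. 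For the high-frequency piece one does not deform: its local $\mathcal{H}$-energy is bounded by its global energy, which by conservation equals that of $(1-\varphi(\sqrt{P}/A))(u_0,u_1)$, hence is $\lesssim A^{-1}(\|\nabla u_0\|_{H^1} + \|u_1\|_{H^1})$ by the extra derivative on the data. Adding the two and choosing $A = \varepsilon \log(2+t)$ with $\varepsilon$ small makes the first term decay faster than any power of $t$ while the second is $O(1/\log(2+t))$, which gives \eqref{prelim decay}.

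The main obstacle is Step 1: the exponential high-energy resolvent bound for a merely Lipschitz wavespeed. With smooth $c$ one could use propagation of singularities or a Carleman estimate with a pseudodifferential weight and get only polynomial loss (hence polynomial decay); with $\nabla c \in L^\infty$ alone the weight must be built by hand, and the term carrying $\nabla c$ forces the weight, and thus the resolvent, to be exponentially large---this exponential is precisely what degrades the decay to logarithmic. Secondary difficulties are the limiting absorption principle and unique continuation for Lipschitz coefficients, the bookkeeping of the low-energy expansion (in particular the $\log\lambda$ term of the free resolvent when $n = 2$ or $n$ is even), and carrying out the contour deformation on the logarithmic cover when $n$ is even.
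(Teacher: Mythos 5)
Your outline is correct and reaches the theorem by the same overall strategy as the paper: an exponential $L^2\to L^2$ bound on $\chi R(\lambda)\chi$ at high energy coming from the semiclassical Lipschitz estimate, its extension to an exponentially thin resonance-free strip, a low-energy Neumann-series argument showing $\chi R(\lambda)\chi$ inherits the $1+|\lambda|^{n-2}|\log\lambda|$ behavior of the free resolvent, and then a frequency splitting at $\sim\log(2+t)$ combined with a contour deformation, balancing $e^{CA}e^{-te^{-CA}}$ against $A^{-1}$. Where you differ: you work scalar-mode with the functional calculus $\cos(t\sqrt P)u_0+\sin(t\sqrt P)P^{-1/2}u_1$ and a smooth spectral cutoff plus energy conservation, whereas the paper recasts the equation as a first-order system with the self-adjoint matrix operator $B$, uses $(B-i)^{-k}$ weights, and sets up the $\dot H^1(B(0,R_1))\to\dot H^1(\mathbb{R}^n)$ bookkeeping (its ``main technical innovation'') --- that machinery is really needed for the general Theorem 2 and for $n=2$, while for Theorem 1 your shortcut is legitimate because the compact support of $u_0$ restores the Poincar\'e inequality and puts $u_0\in L^2$; you also propose to reprove the semiclassical input by a Carleman/convex-weight argument (the paper cites Datchev and Shapiro as a black box), and you pass from the real axis into the strip via a Tang--Zworski/Burq semiclassical maximum principle, whereas the paper uses a more elementary Vodev-style resolvent-identity and Neumann-series continuation (its Lemma 5.1), which avoids Phragm\'en--Lindel\"of entirely. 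Two details you should tighten, though neither is a gap in the idea: your low-energy factorization $(I-R_0(\lambda)W(\lambda))^{-1}R_0(\lambda)c^{-2}$ involves uncut free resolvents, which do not continue across the axis as bounded operators --- you must insert cutoffs and rearrange so that only $\chi R_0(\lambda)\chi$ and $(1-c^{-2})\lambda^2\chi R_0(\lambda)\chi$ appear, exactly as in the paper's Proposition 4.1; and the Stone-formula representation carries a boundary term from $\mathrm{Im}\,\lambda<0$ (the $R_B(\lambda-i\varepsilon)$ piece), whose deformation requires resolvent bounds just below the axis, obtained in the paper from the adjoint identity $R(\lambda)^*=R(\overline{\lambda})$; your write-up should account for that term explicitly.
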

Theorem \ref{brief decay} follows from Theorem \ref{decay}, which appears in Section \ref{main statement} below. In Theorem \ref{decay}, we obtain additional powers of $\log(2+t)$ in the denominator if $u_0$ and $u_1$ possess greater regularity with respect to the differential operator $-c^2(x)\Delta$. 

In contrast with local energy decay, the global energy of the solution to \eqref{prelim wave} is conserved because the wave propagator is unitary, see \eqref{wave prop unitary} in section \ref{main statement}.  


The decay rate  \eqref{prelim decay} was first obtained by Burq \cite{bu98,bu02} for smooth perturbations of the Laplacian outside an obstacle. Bouclet \cite{bo11} established a similar decay rate on $\mathbb{R}^n$ when the Laplacian is defined by an asymptotically Euclidean metric. For logarithmic decay rates in transmission problems and general relativity, see \cite{be03, ga17, mo16}. The novel aspect of Theorem \ref{brief decay} is that \eqref{prelim decay} now holds with a weaker regularity condition on the wavespeed.

Logarithmic decay rates are well-known to be optimal when resonances are exponentially close to the real axis. This connection was observed by Ralston \cite{ra69}. He later showed that such resonances exist for a certain class of smooth wavespeeds \cite{ra71}. See \cite{hosm14} for a related construction in general relativity.



The study of local energy decay more broadly has a long history which we will not review here. Some recent papers using techniques similar to those in this article include \cite{povo99} and \cite{ch09}. See also \cite{hizw17} for more historical background and references. 



To prove Theorem \ref{decay}, the key is to establish suitable Sobolev space estimates at high and low energy on the norm of the  meromorphic continuation of the cutoff resolvent $\chi R(\lambda) \chi \defeq \chi (-c^2 \Delta - \lambda^2)^{-1} \chi$,  where $\chi \in C_0^\infty(\mathbb{R}^n)$ and $\lambda \in \mathbb{R} \setminus \{0\}$. Here, the relevant spaces are $L^2(\mathbb{R}^n)$ and $\dot{H}^1(\mathbb{R}^n)$, a homogeneous Sobolev space. They correspond to the second and first terms on the left side of \eqref{prelim decay}, respectively. 

At high energy, we use the exponential semiclassical resolvent estimate \eqref{semiclassical cutoff} for a Lipschitz potential, proved by Datchev \cite{da} and the author \cite{s}, to show (Proposition \ref{perturbed resolv est high energy})
\begin{gather}
\| \chi R(\lambda) \chi \|_{L^2(\mathbb{R}^n)\to L^2(\mathbb{R}^n)} \le e^{C_1| \lambda|}, \qquad   \lambda \in \mathbb{R} \setminus [-M,M], \text{ some $M > 1$}. \label{high energy est}
\end{gather}
At low energy, we find (Proposition \ref{desired exp prop})
\begin{gather}
\| \chi R(\lambda) \chi \|_{L^2(\mathbb{R}^n) \to L^2(\mathbb{R}^n)} \le C_1 (1 + |\lambda |^{n-2} |\log \lambda|) , \qquad  \lambda \in [-\varepsilon_0, \varepsilon_0] \setminus \{0\}, \text{ some $0 < \varepsilon_0< 1$}. \label{bound near 0}
\end{gather}

Since the first posting of this paper as a preprint, there has been further progress in the setting of low regularity semiclassical resolvent estimates. Klopp and Vogel \cite{klvo18} and the author \cite{sh18}, working independently and at the same time, showed a different exponential resolvent estimate for semiclassical Schr\"dinger operators with a compactly supported $L^\infty$ potential. Using arguments like those in Section \ref{resolvent estimate at high energy} below, this semiclassical estimate implies a bound similar to \eqref{high energy est}, while \eqref{bound near 0} is unaffected by reducing the regularity. As a result, one has a different logarithmic decay rate for wavespeeds $c$ that are only an $L^\infty$ perturbation of unity.  

\begin{theorem} \label{brief decay infty}
Assume
\begin{equation*} 
 c = c(x) >0, \quad  c, c^{-1} \in L^\infty(\mathbb{R}^n), \quad \supp(c-1) \text{ is compact}.
\end{equation*}
Suppose $\supp u_0$, $\supp u_1 \subseteq B(0,R_1)$, $\nabla u_0 \in (H^1(\mathbb{R}^n))^n$, and $u_1 \in H^1(\mathbb{R}^n)$. Then for any $ \varepsilon >0$ and $R_2 > 0$, there exists $C > 0$ such that the solution $u$ to \eqref{prelim wave} satisfies for $t \ge 0$,
\begin{equation} \label{prelim decay infty}
\left( \int_{B(0, R_2)} |\nabla u|^2 + c^{-2} |\partial_t u|^2 dx \right)^{\frac{1}{2}} \le \frac{C}{\log^{\frac{3}{4 + \varepsilon}}(2 + t)}\left( \| \nabla u_0 \|_{(H^1(\mathbb{R}^n))^n} + \| u_1\|_{H^1(\mathbb{R}^n)} \right).
\end{equation}
\end{theorem}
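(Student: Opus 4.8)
The plan is to run the proof of Theorem \ref{decay} (hence of Theorem \ref{brief decay}) with essentially no change except at high energy, where the Lipschitz semiclassical resolvent estimate \eqref{semiclassical cutoff} gets replaced by its $L^\infty$ counterpart. Recall that the whole argument is driven by two bounds on the meromorphically continued cutoff resolvent $\chi R(\lambda)\chi = \chi(-c^2\Delta - \lambda^2)^{-1}\chi$ --- one near $\lambda = 0$ and one for $|\lambda|$ large, each on both $L^2(\mathbb{R}^n)$ and the homogeneous space $\dot{H}^1(\mathbb{R}^n)$ --- which are then fed into Burq's contour-deformation and semigroup mechanism to convert exponential resolvent growth along the real axis into a $\log^{-\theta}(2+t)$ energy decay rate. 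The low-energy bound \eqref{bound near 0} was established using only $c, c^{-1} \in L^\infty$ and compact support of $c-1$, so it is available verbatim under the hypotheses of Theorem \ref{brief decay infty}.

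For the high-energy bound, first reduce to a semiclassical Schr\"odinger problem: with $h = |\lambda|^{-1}$ and $V \defeq 1 - c^{-2}$, which is a compactly supported $L^\infty(\mathbb{R}^n)$ function, solving $(-c^2\Delta - \lambda^2)u = f$ is equivalent to solving $(-h^2\Delta + V - 1)u = h^2 c^{-2} f$. The semiclassical resolvent estimate of Klopp--Vogel \cite{klvo18} and the author \cite{sh18} for compactly supported $L^\infty$ potentials then gives, for $z$ in a fixed neighborhood of $1$ and a cutoff $\chi$,
\begin{gather*}
\| \chi (-h^2\Delta + V - z)^{-1} \chi \|_{L^2(\mathbb{R}^n) \to L^2(\mathbb{R}^n)} \le e^{C h^{-4/3}\log(h^{-1})}.
\end{gather*}
Carrying this through the arguments of Section \ref{resolvent estimate at high energy} --- the same manipulations used there to derive \eqref{high energy est} from \eqref{semiclassical cutoff}, including the passage from $L^2 \to L^2$ control to the $\dot{H}^1$ and mixed bounds that enter \eqref{prelim decay} --- yields, for some $M > 1$,
\begin{gather*}
\| \chi R(\lambda) \chi \|_{L^2(\mathbb{R}^n) \to L^2(\mathbb{R}^n)} \le e^{C_1 |\lambda|^{4/3}\log|\lambda|}, \qquad \lambda \in \mathbb{R} \setminus [-M, M],
\end{gather*}
and likewise on $\dot{H}^1$. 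For the decay argument it is convenient to weaken this to $e^{C_\varepsilon |\lambda|^{4/3 + \varepsilon}}$, valid for every $\varepsilon > 0$.

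Finally, insert \eqref{bound near 0} and the displayed high-energy bound into the proof of Theorem \ref{decay}. That argument turns a high-energy resolvent bound of the form $e^{C|\lambda|^{\alpha}}$ into a local energy decay rate $\log^{-1/\alpha}(2+t)$, at the cost of the fixed loss of regularity with respect to $-c^2\Delta$ already present in \eqref{prelim decay} --- which is exactly why $\nabla u_0 \in H^1$ and $u_1 \in H^1$ suffice. Taking $\alpha = 4/3 + \varepsilon$ gives $1/\alpha = 3/(4 + 3\varepsilon)$, and renaming $\varepsilon$ produces \eqref{prelim decay infty}.

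The only genuinely new work, and the place where care is warranted, is the high-energy step: one must check that the $L^\infty$ semiclassical estimate of \cite{klvo18, sh18} --- stated for the Schr\"odinger resolvent near the real axis --- propagates through Section \ref{resolvent estimate at high energy} to the Sobolev-space bounds on $\chi R(\lambda)\chi$ that the decay mechanism consumes, with the exponent $h^{-4/3}\log(h^{-1})$ tracked honestly. In particular, the $\dot{H}^1$ bound should be obtained not by differentiating the equation (the commutator $[\nabla, c^2\Delta]$ involves $\nabla c$, which is no longer in $L^\infty$) but by combining the $L^2$ bound for a slightly enlarged cutoff with an interior elliptic estimate, using $\chi \Delta u = \chi c^{-2}(f + \lambda^2 u)$ together with interpolation --- which requires only $c^{-2} \in L^\infty$. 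Everything else, namely the low-energy analysis and the passage from resolvent estimates to energy decay, is identical to the body of the paper.
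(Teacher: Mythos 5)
Your proposal is correct and is essentially the proof the paper intends but defers to \cite{sh18b}: keep the low-energy bound \eqref{bound near 0} (whose proof uses only $1-c^{-2}\in L^\infty_{\text{comp}}$), replace the Lipschitz semiclassical input by the $L^\infty$ estimate $e^{Ch^{-4/3}\log(1/h)}$ of \cite{klvo18, sh18}, and rerun Sections \ref{resolvent estimate at high energy}--\ref{proof of local energy decay}, which converts a real-axis bound $e^{C_\varepsilon|\lambda|^{4/3+\varepsilon}}$ into the rate $\log^{-3/(4+\varepsilon)}(2+t)$ exactly as you compute (choosing $X(t)\sim(\log(2+t))^{1/(4/3+\varepsilon)}$ in the contour argument). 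The only remark is that your elliptic-estimate detour for the $\dot H^1$ component is unnecessary: the paper's identity \eqref{gradient identity with cutoffs} already controls $\nabla \chi R(\lambda)\chi$ using only the free-resolvent derivative bounds \eqref{nonsemiclassical resolv est away zero} and the $L^2\to L^2$ bound on $\chi R(\lambda)\chi$, with no appearance of $\nabla c$.
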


See  \cite[Chapter 5]{sh18b} for proof of Theorem \ref{brief decay infty} using techniques similar to the ones in this paper. 

The main technical innovation in this article is the careful distinction between $\dot{H}^1(\mathbb{R}^n)$ and $\dot{H}^1(B(0,R))$, the space of elements of $\dot{H}^1(\mathbb{R}^n)$ supported in a fixed ball, to deduce from \eqref{high energy est} and \eqref{bound near 0} analogous estimates for the homogeneous space. 

If $n\ge 3$, one can extend the continuation of $\chi R(\lambda) \chi$  to a bounded operator on all of $\dot{H}^1(\mathbb{R}^n)$ using that for any $\chi \in C_0^\infty(\mathbb{R}^n)$, there exists $C_\chi > 0$ such that  
\begin{equation} \label{Poincare dim two}
\| \chi \varphi \|_{L^2(\mathbb{R}^n)} \le C_\chi \| \nabla \varphi \|_{L^2(\mathbb{R}^n)}, \qquad \text{all $\varphi \in C_0^\infty(\mathbb{R}
^n)$.} 
\end{equation}
This estimate follows, for instance, from the Gagliardo–-Nirenberg–-Sobolev inequality \cite[Theorem 1, Section 5.6.1]{ev}. 

On the other hand, \eqref{Poincare dim two} fails when $n = 2$, creating an obstruction to extending $\chi R(\lambda) \chi$  to all of $\dot{H}^1(\mathbb{R}^n)$. However, for any $R_1 > 0$ as in Theorem \ref{brief decay}, restricting to $C_0^\infty(B(0,R_1))$ restores access to \eqref{Poincare dim two}, with $C_\chi$ now also depending  on $R_1$. Then, for any dimension $n \ge 2$, the continuation of $\chi R(\lambda) \chi$ extends as a bounded operator $\dot{H}^1(B(0,R_1)) \to \dot{H}^1(\mathbb{R}^n)$ with norm estimates similar to \eqref{high energy est} and \eqref{bound near 0}, which are sufficient to prove Theorem \ref{decay}. 

The logarithmic singularity appearing in \eqref{bound near 0} when $n =2$  differs from the case of an obstacle, where the resolvent is bounded near zero in all dimensions. Although, this singularity is still weak enough to allow integral estimates via Stone's Formula, similar to the those appearing in \cite{povo99}. From these estimates  we conclude Theorem \ref{decay}.


The outline of this article is as follows. In Section \ref{main statement}, we give the more general statement of the local energy decay. In Section \ref{background}, we state preliminary facts about scattering theory and about the Hilbert space $H$ on which we define  our wave equation. In Sections \ref{resolvent estimate at low energy} and \ref{resolvent estimate at high energy}, we prove the $L^2(\mathbb{R}^n) \to L^2(\mathbb{R}^n)$ cutoff resolvent estimates at high and low energy, and in Section \ref{statement of main resolvent estimate}, we convert them into the appropriate $\dot{H}^1(B(0,R_1)) \to \dot{H}^1(\mathbb{R}^n)$ resolvent estimate. Finally, in Section \ref{proof of local energy decay}, we combine this latter resolvent estimate with Stone's Formula to prove the local energy decay.  

For the reader's convenience and the sake of completeness, we include an appendix in which we prove the operators $L$ and $B$, defined in Section \ref{main statement}, are self-adjoint.  

The author is grateful to Kiril Datchev for  helpful discussions and suggestions during the writing of this article, and to the Purdue Research Foundation for support through a research assistantship.

\section{Statement of the local energy decay} \label{main statement}

In this section, we state Theorem \ref{decay}, the main theorem in the paper. 
We begin by setting up the functional analytic framework in which we work. 

Let $\Omega \subseteq \mathbb{R}^n$ be open. Set $L^2_c(\Omega) \defeq L^2(\Omega, c^{-2}(x)dx)$, where $c$ satisfies \eqref{basic c assumptions}. Let $\dot{H}^1(\Omega)$ denote the homogeneous Sobolev space of order one, defined as the Hilbert completion of $C^\infty_0(\Omega)$ with respect to the norm
\begin{equation*}
\|\varphi\|^2_1 \defeq \int_{\Omega} |\nabla \varphi(x)|^2 dx. 
\end{equation*}
Thus, elements of $\dot{H}^1(\Omega)$ are equivalence classes $[\varphi_m]$ of sequences $\{ \varphi_m \} \subseteq C_0^\infty(\Omega)$ which are Cauchy with respect to the $\| \cdot \|_1$-norm. For an element $u = [\varphi_m] \in \dot{H}^1(\Omega)$, we denote by $\nabla u$ the vector which is the limit in $(L^2(\Omega))^n$ of the vectors $\nabla \varphi_m$.

Because the non-homogeneous Sobolev space $H^1(\mathbb{R}^n)$ is the completion of $C_0^\infty(\mathbb{R}^n)$ with respect to a stronger norm, by inclusion we may regard $H^1(\mathbb{R}^n)$ as a closed subspace of $\dot{H}^1(\mathbb{R}^n)$. Also, for any $\Omega \subseteq \mathbb{R}^n$, the inclusion map $C_0^\infty(\Omega) \to C_0^\infty(\mathbb{R}^n)$ induces an isometry $\dot{H}^1(\Omega) \to \dot{H}^1(\mathbb{R}^n)$. So we may also regard $\dot{H}^1(\Omega)$ as a closed subspace of $\dot{H}^1(\mathbb{R}^n)$. 

We note, for the sake of completeness, that, for $n \ge 2$, $\dot{H}^1(\mathbb{R}^n)$ may be regarded as a set of translation classes of functions $u \in H^1_{\text{loc}}(\mathbb{R}^2)$ such that $\nabla u \in L^2(\mathbb{R}^2)$, equipped with the inner product $(u ,v) \mapsto \langle \nabla u, \nabla v \rangle_{(L^2)^n}$. See \cite{orsu12} for further details. 

We work within the Hilbert space $H \defeq \dot{H}^1(\mathbb{R}^n) \oplus L^2_c(\mathbb{R}^n)$. For $R > 0$, set 
\begin{equation*}
H_R \defeq \{(u_0, u_1) \in H : u_0 \in \dot{H}^1(B(0,R)), \text{ }\supp u_1 \subseteq B(0,R) \}.
\end{equation*}
This is a closed subspace of $H$ and is the space of initial conditions on which we show the local energy decay holds. 

Set $L \defeq -c^2(x) \Delta : L^2_c(\mathbb{R}^n) \to L^2_c(\mathbb{R}^n)$, which is nonnegative and self-adjoint with respect to the domain $D(L) = H^2(\mathbb{R}^n)$. Define the operator $B$ by the matrix
\begin{equation*}
B \defeq \begin{bmatrix} 0 & iI \\ -iL  & 0 \end{bmatrix}: H \to H,
\end{equation*}
which is self-adjoint with respect to the domain 
\begin{equation*}
D(B) \defeq \{(u_0,u_1) \in H : \Delta u_0 \in L^2(\mathbb{R}^n),  u_1 \in H^1(\mathbb{R}^n)\}.
\end{equation*}
In the Appendix, we prove that $L$ and $B$ are self-adjoint on their respective domains. 

For $k \in \mathbb{N}$, let $\| \cdot \|_{D(B^k)}$ be the graph norm associated to $B^k$:
\begin{equation*}
\|(u_0,u_1) \|_{D(B^k)} \defeq \| (u_0,u_1) \|_H + \|B^k(u_0,u_1)\|_H, \qquad (u_0,u_1) \in D(B^k).
\end{equation*}


The operator $B$ allows us to write the wave equation as a first order system. That is, given $(u_0, u_1) \in H$, 
\begin{equation} \label{wave prop unitary}
U(t) \defeq (U_0(t), U_1(t)) = e^{-itB}(u_0, u_1),
\end{equation}
is the unique solution in $H$ to the wave equation
\begin{equation} \label{wave equation c}
 \begin{cases} \partial_t U + iBU  = 0, \qquad \text{in } \mathbb{R}^n \times (0,\infty), \\ U(0) = (u_0, u_1).\\
 \end{cases}  
\end{equation}
We now state the local energy decay rate for solution of \eqref{wave equation c}.
\begin{theorem} \label{decay}
Suppose that $(u_1, u_0) \in D(B^k) \cap H_{R_1}$ for some $k \in \mathbb{N}$ and $R_1 > 0$. Then for any $R_2 >0$, there exists $C > 0$ such that for $t \ge 0$,
\begin{equation} \label{decay rate}
\left(\int_{B(0, R_2)} |\nabla U_0|^2(t) + |U_1|^2(t) dx \right)^{\frac{1}{2}} \le \frac{C}{(\log(2 +t))^k} \|(u_0, u_1)\|_{D(B^k)}.
\end{equation}
\end{theorem}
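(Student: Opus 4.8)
The plan is to pass from the wave propagator $e^{-itB}$ to the cutoff resolvent via Stone's Formula, insert the high- and low-energy resolvent bounds \eqref{high energy est} and \eqref{bound near 0} (upgraded to the homogeneous space $\dot H^1(B(0,R_1)) \to \dot H^1(\mathbb{R}^n)$ as described in the introduction), and extract the logarithmic decay by the standard trick of interpolating a polynomial-in-$\lambda$ bound on the integrand against the exponential-in-$|\lambda|$ resolvent bound, then optimizing a cutoff parameter in terms of $t$. First I would fix $\chi \in C_0^\infty(\mathbb{R}^n)$ equal to $1$ on $B(0,\max(R_1,R_2))$, write $U(t) = e^{-itB}(u_0,u_1)$, and use that $B$ is self-adjoint (proved in the Appendix) together with Stone's Formula to represent the spectrally localized solution $\psi(B)e^{-itB}(u_0,u_1)$, for suitable cutoffs $\psi$, in terms of the boundary values of the resolvent $(B-\zeta)^{-1}$ as $\zeta \to \lambda \pm i0$. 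Since $B^2$ acts as $L = -c^2\Delta$ in the appropriate sense, the resolvent of $B$ is built out of $R(\lambda) = (L-\lambda^2)^{-1}$, so the cutoff resolvent estimates on $R(\lambda)$ are exactly what feed the contour integral.

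The key steps, in order: (1) reduce \eqref{decay rate} to estimating $\chi e^{-itB}(u_0,u_1)$ in the energy norm, using finite speed of propagation or simply the support hypothesis $(u_0,u_1)\in H_{R_1}$ together with the cutoff $\chi$; (2) split the spectral parameter into a low-energy piece $|\lambda| \le \varepsilon_0$, a bounded-middle piece $\varepsilon_0 \le |\lambda| \le M$, and a high-energy piece $|\lambda| \ge M$; on the middle piece the resolvent is holomorphic up to finitely many poles, or — since on $\mathbb{R}$ away from $0$ there are no real poles — simply bounded, contributing rapidly decaying terms after integration by parts in $\lambda$ exploiting the regularity $(u_0,u_1)\in D(B^k)$, which supplies $k$ extra factors of $\lambda^{-1}$ (hence $\langle\lambda\rangle^{-k}$ decay) inside the integral; (3) on the high-energy piece, combine the a priori polynomial bound on $\|\chi(B-\lambda)^{-1}\chi\|$ coming from $B$ being self-adjoint (of order $\langle\lambda\rangle^{N}$ on the relevant Sobolev scale) with the exponential bound $e^{C_1|\lambda|}$ from \eqref{high energy est}, deform the contour a distance $\sim 1/\log(2+t)$ off the real axis (or equivalently truncate the integral at $|\lambda| \sim \log(2+t)$ and use the $\langle\lambda\rangle^{-k}$ decay from the regularity to control the tail), picking up a factor $e^{-t\cdot(\text{Im shift})}e^{C_1|\lambda|}$ that is $O(1)$ for $|\lambda|\lesssim \log(2+t)$; (4) on the low-energy piece, use \eqref{bound near 0}: the bound $C_1(1+|\lambda|^{n-2}|\log\lambda|)$ is locally integrable near $\lambda=0$ for all $n\ge 2$ (the only delicate case, $n=2$, gives an integrable $|\log\lambda|$ singularity), so this piece contributes a bounded, $t$-independent term which does not obstruct — and in fact can be shown to decay — via the oscillation $e^{-it\lambda}$ and one integration by parts; (5) assemble: the high-energy contour deformation yields the $(\log(2+t))^{-k}$ rate, the regularity hypothesis $(u_0,u_1)\in D(B^k)$ being what converts one power of the logarithmic contour shift into $k$ powers, and track constants to see $C$ depends only on $R_1,R_2,k,n,c$.

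The main obstacle I anticipate is step (1)–(3) carried out rigorously on the \emph{homogeneous} space $\dot H^1(\mathbb{R}^n)$ rather than $L^2$: Stone's Formula and the spectral calculus are cleanest on $L^2$, so one must justify the resolvent identities and the contour manipulations on the first component of $H$, where — as the introduction stresses — the subtlety is that $\chi R(\lambda)\chi$ is only bounded $\dot H^1(B(0,R_1)) \to \dot H^1(\mathbb{R}^n)$, not on all of $\dot H^1(\mathbb{R}^n)$ when $n=2$, so one must keep the source supported in $B(0,R_1)$ throughout and use the Poincaré-type inequality \eqref{Poincare dim two} (with the ball-dependent constant) to close the estimates. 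A secondary technical point is verifying that the branch of $\lambda \mapsto \lambda^2$, the behavior of the meromorphic continuation across $\lambda = 0$, and the absence of real resonances (positive commutator / unique continuation for the Lipschitz operator $-c^2\Delta$) are all compatible with deforming the contour, but these are by now standard once the two resolvent estimates \eqref{high energy est} and \eqref{bound near 0} are in hand.
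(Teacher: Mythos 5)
Your overall architecture (Stone's formula, truncation of the spectral integral at $X\sim\log(2+t)$, contour deformation into the resonance-free region, the $D(B^k)$ hypothesis supplying $\langle\lambda\rangle^{-k}$ factors, and keeping the data in $\dot H^1(B(0,R_1))$ throughout) is the same as the paper's, but two of your quantitative mechanisms fail as stated. First, the contour shift of depth $\sim 1/\log(2+t)$ at high energy is not available: the continuation of $\chi R(\lambda)\chi$ (hence of $S_{R_2}R_B(\lambda)$ on $H_{R_1}$ data) is only established in the exponentially thin region $|\imag\lambda|<e^{-C_2|\real\lambda|}$ (Proposition \ref{perturbed resolv est high energy}, Lemma \ref{continue to ball}), and nothing better can be expected, since resonances may be exponentially close to the real axis (Ralston's examples are exactly why the rate is logarithmic). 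At $|\real\lambda|\sim\log(2+t)$ the admissible shift is only $O\bigl((2+t)^{-C_2}\bigr)\ll 1/\log(2+t)$. Moreover, making the shifted integrand ``$O(1)$'' is not enough: every piece must be $O\bigl((\log(2+t))^{-k}\bigr)$. The correct bookkeeping is the paper's: push the contour down only to depth $e^{-C_3X}$ with $C_3=\max\{2C_1,C_2\}$, so the horizontal segments carry $Xe^{-te^{-C_3X}+C_1X}$, and the $(\log(2+t))^{-k}$ rate comes from the $(\lambda-i)^{-k}$ factor both in the spectral tail $|\lambda|\ge X$ and on the short vertical segments; choosing $X=(2C_3)^{-1}\log(2+t)$ then balances $te^{-C_3X}$ against $C_1X$. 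Your parenthetical ``truncate at $|\lambda|\sim\log(2+t)$'' is the right idea, but it must be combined with the exponentially small shift, not the $1/\log(2+t)$ one.

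Second, the low-energy piece cannot be left as ``a bounded, $t$-independent term'' (that alone destroys the decay), and the proposed single integration by parts does not close in dimension $n=2$: by \eqref{free resolv E expansion} the integrand behaves like $\log\lambda$ near $0$, so its $\lambda$-derivative has a $1/\lambda$ singularity that is not absolutely integrable, and the boundary terms at the edges of the low-energy window must also be matched. The paper avoids this by routing the contour around $\lambda=0$ at depth $e^{-C_3X}$ (the segments $I^+_5,I^+_6,I^+_7$ of Figure \ref{plus}), so that only $\int_0^{e^{-C_3X}}|\log r|\,dr$ appears, which is harmless. A smaller point: on the bounded middle piece the $\langle\lambda\rangle^{-k}$ factors are just constants, so they do not make that piece ``rapidly decaying''; its smallness again comes from the contour deformation, using that the compact rectangle below $[\varepsilon_0,M]$ contains no real poles (Section \ref{Continuation of the perturbed resolvent}) and only finitely many poles overall, as in Lemma \ref{piece together resolvent estimates}. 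With these corrections your plan becomes essentially the paper's proof.
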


\section{Background and preliminaries} \label{background}

\noindent In this section, we recall several facts about the analytic continuation of the cutoff resolvent for the free Laplacian. We also explain why, when the resolvent is perturbed to include the wavespeed, the continuation still has no real poles away from zero. Finally, we describe how the homogeneous Sobolev space of a ball behaves with respect to the perturbed resolvent as well as the wave operator. The proofs in subsequent sections rely on these facts.

\subsection{Continuation of the free resolvent} 

Set $\mathbb{R}_{+} \defeq \{ r \in \mathbb{R} : r \ge 0\}$, $\mathbb{R}_{-} \defeq \{ r \in \mathbb{R} : r \le 0\}$. If $\imag \lambda >0$, we have $\lambda^2 \notin \mathbb{R}_{+}$, hence $R_0(\lambda) \defeq (-\Delta - \lambda^2)^{-1}$ is well-defined as a bounded operator $L^2(\mathbb{R}^n) \to H^2(\mathbb{R}^n)$. For $\chi \in C_0^\infty(\mathbb{R}^n)$, it is well-known that the free cutoff resolvent 
\begin{equation*}
\chi R_0(\lambda) \chi = \chi(-\Delta - \lambda^2)^{-1} \chi : L^2(\mathbb{R}^n) \to H^2(\mathbb{R}^n),
\end{equation*}
continues analytically from $ \imag \lambda > 0$ to $\mathbb{C}$ when $n \ge 2$ is odd and to $\mathbb{C} \setminus i\mathbb{R}_{-} $ when $n$ is even. In fact, in even dimensions, the continuation can be made to the logarithmic cover of $\mathbb{C}\setminus\{0\}$, although we will not need this stronger fact.

Furthermore, the continuation of $\chi R_0(\lambda) \chi$ has the expansion
\begin{equation} \label{free resolv E expansion}
\chi R_0(\lambda) \chi =  E_1(\lambda) + \lambda^{n-2}(\log \lambda) E_2(\lambda),
\end{equation}
for $\lambda \in \mathbb{C} \setminus {i \mathbb{R}_-}$. Here, $E_1(\lambda)$ and $E_2(\lambda)$ are entire operator-valued functions, and $E_2 \equiv 0$ when $n$ is odd.  For further details on the continuation of the cutoff resolvent, see chapters 2 and 3 of \cite{dyzw} and section 1.1 in  \cite{vod01}.

\subsection{Estimates for the continued free resolvent}

Next, we recall well-known $L^2(\mathbb{R}^n) \to L^2(\mathbb{R}^n)$ estimates for the cutoff resolvent away from the origin. In Section \ref{resolvent estimate at high energy}, we use these estimates to establish a bound on the perturbed resolvent at high energy. 
\begin{gather}  
\forall M > 0, \quad \exists C_M > 0: \quad \text{if $|\real \lambda|\ge M,$ $\imag \lambda \ge -M,$ and $|\alpha_1| + |\alpha_2| \le 2$, then } \nonumber \\ 
\| \partial_x^{\alpha_1} \chi R_0(\lambda) \chi \partial_x^{\alpha_2} \|_{L^2 \to L^2} \le C_M|\lambda|^{|\alpha_1|+|\alpha_2|-1}.  \label{nonsemiclassical resolv est away zero}
\end{gather} 
Using the Cauchy formula with \eqref{nonsemiclassical resolv est away zero} implies, for a different constant $\tilde{C}_M > 0$, 
\begin{equation}
 \label{nonsemiclassical resolv est away zero Cauchy} 
\left \|\frac{d}{d\lambda}  \partial_x^{\alpha_1} \chi R_0(\lambda) \chi \partial_x^{\alpha_2} \right \|_{L^2 \to L^2} \le \tilde{C}_M|\lambda|^{|\alpha_1|+|\alpha_2|-1}, \qquad |\real \lambda|\ge M, \text{ } \imag \lambda > -M, \text{ } |\alpha_1| + |\alpha_2| \le 2.
\end{equation}
See also \cite[Section 5]{vod14}.
\subsection{Continuation of the perturbed resolvent} \label{Continuation of the perturbed resolvent}

Set $R(\lambda) \defeq (L - \lambda^2)^{-1}$, where initially we take $\imag \lambda >0 $. For $\chi \in C_0^\infty(\mathbb{R}^n)$, the cutoff resolvent $\chi R(\lambda) \chi$ satisfies the assumptions of the black box scattering framework introduced in \cite{sjzw91} and also presented in \cite{dyzw, sj}. This implies that $\chi R(\lambda) \chi$ continues meromorphically $L^2(\mathbb{R}^n) \to H^2(\mathbb{R}^n)$ from $\imag \lambda >0 $ to $\mathbb{C} \setminus\{0\}$ when $n \ge 2$ is odd, and to $\mathbb{C}\setminus i \mathbb{R}_{-}$ when $n$ is even. As in the case of the free resolvent, the continuation in even dimensions can be made to the logarithmic cover of $\mathbb{C} \setminus \{0\}$, although this stronger result is not needed for our purposes.

It is also follows that if $\lambda \in \mathbb{R} \setminus \{0\}$ is  a pole of the continuation, then there must exist an embedded eigenvalue corresponding to $\lambda$. That is, there exists a nonzero function $u \in H^2_{\text{comp}}(\mathbb{R}^n)$ such that $(L- \lambda^2) u = 0$. For more details, see Theorems 4.17 and 4.18 in \cite{dyzw}. However, a Carleman estimate \cite[Lemma 3.31]{dyzw} rules out the possibility of embedded eigenvalues on $\mathbb{R}\setminus \{0\}$.  Therefore, the continuation of $\chi R(\lambda) \chi$ has no poles there. 

\subsection{Operators on the homogeneous Sobolev space of a ball} \label{extend resolv to homg}

If $\lambda^2 \notin \mathbb{R}_+$, there is a constant $C_\lambda $ depending on $\lambda$ such that 
\begin{equation} \label{elliptic L2 to H2}
\|R(\lambda) \varphi \|_{H^2} \le C_\lambda \| \varphi \|_{L^2}, \qquad \varphi \in C_0^\infty(\mathbb{R}^n).
\end{equation} This follows by first noting that $(1 -\Delta)R(\lambda)$ is well defined as an operator $L^2(\mathbb{R}^n) \to L^2(\mathbb{R}^n)$ because $D(L) = H^2(\mathbb{R}^n)$, and then by rewriting
\begin{equation*}
\begin{split}
(1 - \Delta) R(\lambda) &= R(\lambda) + c^{-2}LR(\lambda) \\
&= R(\lambda) + c^{-2}\left(I + \lambda^2 R(\lambda)\right),
\end{split}
\end{equation*}
which shows that $(1 - \Delta) R(\lambda)$ is in fact bounded $L^2(\mathbb{R}^n) \to L^2(\mathbb{R}^n)$. 


Furthermore, if the support of $\varphi$ is required to lie in a fixed ball $B(0,R)$, there is a Poincar\'e-type inequality for all $n \ge 2$,
\begin{equation} \label{Poincare}
\| \varphi \|_{L^2} \le C_ R \| \nabla \varphi \|_{L^2}, \qquad \varphi \in C^\infty_0(B(0,R)),
\end{equation}
where $C_R \to \infty$ as $R \to \infty$. 

Having \eqref{elliptic L2 to H2} and \eqref{Poincare} allow us to extend $R(\lambda) :L^2(\mathbb{R}^n) \to H^2(\mathbb{R}^n)$ to a bounded operator $\dot{H}^1(B(0,R)) \to H^2(\mathbb{R}^n)$ by setting
\begin{equation} \label{define resolv on homg}
R(\lambda) [\varphi_m] \defeq R(\lambda) \left( L^2\text{-}\lim \varphi_m \right), \qquad [\varphi_m] \in \dot{H}^1(B(0, R)), \qquad \lambda^2 \notin \mathbb{R}_+, 
\end{equation} 
where $L^2$-$\lim \varphi_m$ denotes the $L^2(\mathbb{R}^n)$-limit of $\{\varphi_m\}$, which exists on account of \eqref{Poincare}.


Another fact we deploy in Section \ref{proof of local energy decay} is that if $(u_0, u_1) = ([\varphi_m], u_1) \in D(B) \cap H_R$, then $B(u_0, u_1) = (iu_1, -iLu_0) \in H_{R'}$ for any $R' > R$. To show this, first observe that since $u_1 \in H^1(\mathbb{R}^n)$ and $\supp u_1 \subseteq B(0,R)$, $u_1$ may be approximated in $H^1(\mathbb{R}^n)$ by $C_0^\infty(\mathbb{R})$-functions with supports contained in $B(0, R') \supset B(0, R)$. Therefore $u_1 \in \dot{H}^1(B(0,R'))$. To see that $\supp \Delta u_0 \subseteq B(0,R')$, we integrate against $\varphi \in C_0^\infty(\mathbb{R}^n \setminus \overline{B(0,R)})$ and apply integration by parts twice. We may then take advantage of the fact that each $\supp \varphi_m \subseteq B(0,R)$, 
\[\begin{split}
\int_{\mathbb{R}^n \setminus \overline{B(0,R)}} \Delta u_0 \varphi &= -\int_{\mathbb{R}^n \setminus \overline{B(0,R)}} \nabla u_0  \cdot \nabla \varphi \\
&= -\lim_{m \to \infty} \int_{\mathbb{R}^n \setminus \overline{B(0,R)}} \nabla \varphi_m \cdot \nabla \varphi \\
&= \lim_{m \to \infty} \int_{\mathbb{R}^n \setminus \overline{B(0,R)}} \varphi_m  \Delta \varphi \\
& = 0.
\end{split}\]


\section{Resolvent estimate at low energy}\label{resolvent estimate at low energy}

The purpose of this section is to combine the expansion \eqref{free resolv E expansion} for the free cutoff resolvent with a remainder argument to establish the following low energy bound for the perturbed cutoff resolvent.

\begin{proposition} \label{desired exp prop}
Suppose that $\chi \in C_0^\infty(\mathbb{R}^n)$. Then there exists an $0 < \varepsilon_0  < 1 $ so that  
 
\begin{equation*}
\chi R(\lambda) \chi : L^2(\mathbb{R}^n) \to H^2(\mathbb{R}^n),
 \end{equation*}
is analytic in $Q_{\varepsilon_0} \defeq \{ \lambda \in \mathbb{C}: |\real \lambda |, \text{ } | \imag \lambda | \le \varepsilon_0 \} \setminus i\mathbb{R}_{-}$. Furthermore, there exists $C >0$ such that  
\begin{equation} \label{peturbed est near zero}
\| \chi R(\lambda) \chi \|_{L^2 \to L^2} \le C(1 + |\lambda|^{n-2}|\log \lambda  |), \qquad \lambda \in Q_{\varepsilon_0}.
\end{equation}
\end{proposition}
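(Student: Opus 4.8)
The plan is to reduce the estimate for the perturbed cutoff resolvent $\chi R(\lambda)\chi$ to the known expansion \eqref{free resolv E expansion} for the free cutoff resolvent via a perturbation/Neumann-series argument. First I would fix a large cutoff $\rho \in C_0^\infty(\mathbb{R}^n)$ with $\rho \equiv 1$ on a neighborhood of $\supp(c-1) \cup \supp \chi$, and write the operator identity relating $L - \lambda^2 = -c^2\Delta - \lambda^2$ to $-\Delta - \lambda^2$. Concretely, using $L - \lambda^2 = c^2(-\Delta - c^{-2}\lambda^2)$, one has formally $R(\lambda) = (-\Delta - \lambda^2 + (1 - c^{-2})\lambda^2 \cdot(\text{stuff}))^{-1}c^{-2}$; more cleanly, I would write $(-\Delta - \lambda^2)R(\lambda) = c^{-2}(L-\lambda^2)R(\lambda) + (1 - c^{-2})(-\Delta)R(\lambda)$ type identities, or simply use that $(L - \lambda^2)u = f$ is equivalent to $(-\Delta - \lambda^2)u = c^{-2}f + (c^{-2}-1)(-\Delta u)$, and since $(c^{-2}-1)$ is compactly supported, apply $R_0(\lambda)$ with cutoffs inserted. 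The upshot is an identity of the schematic form $\chi R(\lambda)\chi = \chi R_0(\lambda)\rho (I + K(\lambda))^{-1} \rho \, m \, \chi$ where $m = c^{-2}$ and $K(\lambda)$ is a compactly-supported-symbol error term built from $\rho R_0(\lambda)\rho$ and derivatives, which is compact on $L^2$.

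Next I would analyze the small-$\lambda$ behavior of $K(\lambda)$. By the expansion \eqref{free resolv E expansion}, $\rho R_0(\lambda)\rho = E_1(\lambda) + \lambda^{n-2}(\log\lambda)E_2(\lambda)$ with $E_1, E_2$ entire and $E_2 \equiv 0$ for $n$ odd, so $K(\lambda)$ extends analytically to $Q_{\varepsilon_0}$ (with the $i\mathbb{R}_-$ cut removed when $n$ is even) and $K(\lambda) \to K(0)$ as $\lambda \to 0$, where $K(0)$ is the corresponding operator built from $R_0(0)$. The key algebraic point is that $I + K(0)$ is invertible on the relevant space: if it had nontrivial kernel, one would produce a nonzero compactly supported solution of $Lu = 0$, i.e. a zero-energy eigenfunction; the Carleman estimate / unique continuation argument cited in Section \ref{Continuation of the perturbed resolvent} (together with the fact that $L$ has no negative spectrum and the behavior of harmonic functions at infinity) rules this out, and standard black-box theory guarantees $\chi R(\lambda)\chi$ has no pole accumulating at $0$ along the physical region. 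Hence, shrinking $\varepsilon_0$ if necessary, $(I + K(\lambda))^{-1}$ exists and is uniformly bounded on $Q_{\varepsilon_0}$, and is analytic there.

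Finally I would assemble the estimate. Write $\chi R(\lambda)\chi = \chi R_0(\lambda)\rho\,(I+K(\lambda))^{-1}\,(\text{bounded})$. The rightmost factors are uniformly bounded on $Q_{\varepsilon_0}$, so the $L^2 \to L^2$ norm of $\chi R(\lambda)\chi$ is controlled by $\|\chi R_0(\lambda)\rho\|_{L^2\to L^2}$ up to a constant, possibly plus lower-order contributions from the error terms (which also only involve $\rho R_0(\lambda)\rho$ and are hence no worse). Applying \eqref{free resolv E expansion} once more, $\|\chi R_0(\lambda)\rho\|_{L^2 \to L^2} \le \|E_1(\lambda)\| + |\lambda|^{n-2}|\log\lambda|\,\|E_2(\lambda)\| \le C(1 + |\lambda|^{n-2}|\log\lambda|)$ on the bounded set $Q_{\varepsilon_0}$, which is exactly \eqref{peturbed est near zero}; the analyticity of $\chi R(\lambda)\chi$ in $Q_{\varepsilon_0}$ follows from that of $R_0$ and of $(I+K(\lambda))^{-1}$. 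Note that when $n \ge 3$ the $\log$ term is harmless ($|\lambda|^{n-2}|\log\lambda| \to 0$) so the bound is just $O(1)$, while for $n = 2$ the genuine $|\log\lambda|$ singularity appears, consistent with the discussion in the introduction.

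The main obstacle I anticipate is setting up the perturbation identity cleanly enough that the error operator $K(\lambda)$ is (i) compact on $L^2$, (ii) inherits the analytic continuation and the expansion \eqref{free resolv E expansion} from $R_0$, and (iii) has $I + K(0)$ invertible — the last point is where one must carefully invoke the absence of zero-energy bound states and the black-box no-pole-at-the-boundary statement rather than take it for granted. Handling the even-dimensional case requires keeping track of the $\lambda^{n-2}\log\lambda$ terms through the Neumann series (they do not spoil invertibility since they vanish at $\lambda = 0$), and one must be slightly careful with derivative losses when the identity involves $\Delta u$, which is where the cutoffs and the $H^2$ mapping property \eqref{elliptic L2 to H2} are used.
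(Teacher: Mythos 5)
Your proposal follows the right general template (perturb off the free resolvent and invert an error term), but as set up it has a genuine gap, concentrated precisely in dimension $n=2$, where the proposition has its real content. Because you build the error operator out of $(c^{-2}-1)(-\Delta)R_0(\lambda)$-type terms, it carries no smallness as $\lambda \to 0$, which forces you into analytic Fredholm theory and the invertibility of $I+K(0)$. Two steps of that scheme fail or are unproven. First, in $n=2$ the quantity $\lambda^{n-2}\log\lambda=\log\lambda$ does not vanish at $\lambda=0$, so your claims that $K(\lambda)\to K(0)$ and that the logarithmic terms ``do not spoil invertibility since they vanish at $\lambda=0$'' are false; moreover the factor $\chi R_0(\lambda)\rho$ you peel off is itself unbounded near $0$ when $n=2$, so uniform boundedness of $(I+K(\lambda))^{-1}$ on $Q_{\varepsilon_0}$ cannot be obtained by continuity from $\lambda=0$. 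Second, invertibility of $I+K(0)$ does not follow from the Carleman estimate invoked in Section \ref{Continuation of the perturbed resolvent}: that rules out embedded eigenvalues only on $\mathbb{R}\setminus\{0\}$, whereas the relevant obstruction at $\lambda=0$ is a zero resonance rather than an $L^2$ eigenfunction, and in $n=2$ nonzero constants are bounded harmonic functions --- exactly the zero-resonance phenomenon that produces the $\log\lambda$ in \eqref{free resolv E expansion} --- so ``no zero-energy bound state'' is neither established nor sufficient in your setup.

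The fix, and the paper's actual argument, exploits that the perturbation enters through the spectral parameter rather than through a potential: from $(-c^2\Delta-\lambda^2)u=f$ one gets $(-\Delta-\lambda^2)u=c^{-2}f+\lambda^2(c^{-2}-1)u$ (your schematic identity with the term $(c^{-2}-1)(-\Delta u)$ is not correct as written), so the natural error operator is $K(\lambda)=(1-c^{-2})\lambda^2(-\Delta-\lambda^2)^{-1}$. Taking $\chi\equiv 1$ on $\supp(c-1)$, the expansion \eqref{free resolv E expansion} gives $K(\lambda)\chi=(1-c^{-2})\lambda^2\bigl(E_1(\lambda)+\lambda^{n-2}(\log\lambda)E_2(\lambda)\bigr)$, and since $\lambda^{n}\log\lambda\to 0$ even for $n=2$, one can choose $\varepsilon_0$ so that $\|K(\lambda)\chi\|_{L^2\to L^2}<\tfrac12$ on $Q_{\varepsilon_0}$. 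Then $I+K(\lambda)\chi$ is inverted by a Neumann series --- no compactness, Fredholm theory, or zero-resonance analysis is needed --- and a short algebraic manipulation (using $K(\lambda)=\chi K(\lambda)$ and $(1-\chi)K(\lambda)\equiv 0$) yields $\chi R(\lambda)\chi\, c^2=\chi R_0(\lambda)\chi\bigl(I-\sum_{k\ge 0}(-1)^k(K(\lambda)\chi)^{k+1}\bigr)$, from which both the analytic continuation to $Q_{\varepsilon_0}$ and the bound \eqref{peturbed est near zero} follow directly from the free estimate. If you insist on your Fredholm-at-zero route, you would have to carry out a genuine low-energy analysis at $\lambda=0$, including the $n=2$ zero-resonance issue, which is substantially harder than the problem at hand.
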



\begin{proof}
It suffices to take $\chi \equiv 1$ on the support of $c - 1$. Initially, for $\imag \lambda >0$, define $K(\lambda) : L^2(\mathbb{R}^n) \to L^2(\mathbb{R}^n)$ by
\[\begin{split}
K(\lambda) &\defeq (1 - c^{-2})\lambda^2 (-\Delta - \lambda^2)^{-1} \\
& = (1 - c^{-2}) \lambda^2 \chi (-\Delta - \lambda^2)^{-1}.
\end{split}\]
The continuation of $\chi R_0(\lambda) \chi$ then provides a continuation for $K(\lambda)\chi$ to $\mathbb{C} \setminus i\mathbb{R}_-$. From \eqref{free resolv E expansion}, we see that
\begin{equation*}
K(\lambda) \chi = (1 - c^{-2}) \lambda ^2 (E_1(\lambda) + \lambda^{n-2}\log \lambda E_2(\lambda)).
\end{equation*}
This implies that there exists $0 < \varepsilon_0 < 1$ sufficiently small so that
\begin{equation} \label{K bound}
\lambda \in Q_{\varepsilon_0} \implies \| K(\lambda) \chi \|_{L^2 \to L^2} < \frac{1}{2}.
\end{equation}
Therefore, $I + K(\lambda)\chi$ can be inverted by a Neumann series for $\lambda \in Q_{\varepsilon_0}$,
\begin{equation*}
(I + K(\lambda)\chi)^{-1}  = \sum_{n = 0}^\infty (-1)^n (K(\lambda) \chi)^n : L^2(\mathbb{R}^n) \to L^2(\mathbb{R}^n).
\end{equation*}
Furthermore, $(I + K(\lambda)\chi)^{-1}$ is analytic in $Q_{\varepsilon_0}$ because the series converges locally uniformly there. 

To proceed, notice that $(1 - \chi) K(\lambda) \equiv 0$ for $\imag \lambda >0$ because $(1 - \chi)(1 - c^{-2}) \equiv 0$. From this, it follows that, when $\imag \lambda >0$, $(I - K(\lambda)(1 - \chi))$ is both a left and right inverse for $(I + K(\lambda)(1 - \chi))$. Additionally, observe that
\begin{equation*}
I + K(\lambda) = (I + K(\lambda)(1 - \chi))(I + K(\lambda)\chi), \qquad \imag \lambda > 0. 
\end{equation*} 
Putting the two facts together, we get a left and right inverse for $I + K(\lambda)$
\begin{equation*} 
(I + K(\lambda))^{-1} = (I + K(\lambda) \chi)^{-1}(I - K(\lambda) (1 - \chi)), \qquad \imag \lambda > 0, \text{ } \lambda \in Q_{\varepsilon_0}.
\end{equation*}
We can now write for, $\imag \lambda >0$, $\lambda \in Q_{\varepsilon_0}$,
\[ \begin{split}
\chi(-\Delta - c^{-2}\lambda^2)^{-1} \chi &= \chi(-\Delta - \lambda^2)^{-1}(I + K(\lambda))^{-1}\chi \\
&= \chi(-\Delta - \lambda^2)^{-1}(I + K(\lambda)\chi)^{-1} (I - K(\lambda)(1 - \chi))\chi \\
&=  \chi(-\Delta - \lambda^2)^{-1}(I + K(\lambda)\chi)^{-1} ((I + K(\lambda)\chi) - K(\lambda))\chi \\
&=  \chi R_0(\lambda) \chi -\chi(-\Delta - \lambda^2)^{-1}(I + K(\lambda)\chi)^{-1}K(\lambda)\chi \\
&= \chi R_0(\lambda) \chi - \chi (-\Delta - \lambda^2)^{-1} \sum_{n=0}^\infty (-1)^n \left(K(\lambda) \chi \right)^{n+1} \\
&= \chi R_0(\lambda) \chi - \chi (-\Delta - \lambda^2)^{-1} K(\lambda)\chi \sum_{n=0}^\infty (-1)^n \left(K(\lambda) \chi \right)^{n} \\
&= \chi R_0(\lambda) \chi - \chi R_0(\lambda) \chi K(\lambda)\chi \sum_{n=0}^\infty (-1)^n \left(K(\lambda) \chi \right)^{n} \\
&= \chi R_0(\lambda) \chi \left( I - \sum_{n=0}^\infty (-1)^n \left(K(\lambda) \chi \right)^{n+1} \right). \\
\end{split} \]
For the second-to-last equality, we use $K(\lambda) = \chi K(\lambda)$. We see that the left side continues analytically to $Q_{\varepsilon_0}$ because the right side does. 

To finish the proof, observe that 
\begin{equation*}
\| \chi R_0(\lambda) \chi \|_{L^2 \to L^2} \le C(1 +  |\lambda|^{n-2}|\log  \lambda| ), \qquad \lambda \in Q_{\varepsilon_0},
\end{equation*}
according to \eqref{free resolv E expansion}. Moreover, it follows from \eqref{K bound} that
\begin{equation*}
\left\| I - \sum_{n=0}^\infty (-1)^n \left(K(\lambda) \chi \right)^{n+1} \right\|_{L^2 \to L^2} \le 3, \qquad \lambda \in Q_{\varepsilon_0}.
\end{equation*}
We now conclude \eqref{peturbed est near zero} because
\begin{equation*}
\chi(-\Delta - c^{-2}\lambda^2)^{-1} \chi = \chi R(\lambda) \chi c^2.
\end{equation*}
\end{proof}

\section{Resolvent estimate at high energy} \label{resolvent estimate at high energy}
\noindent The goal of this section is to establish an exponential bound on the perturbed cutoff resolvent when $| \real \lambda|$ is large. Specifically, we prove the following.

\begin{proposition} \label{perturbed resolv est high energy}
For each $\chi \in C_0^\infty(\mathbb{R}^n)$, there exist constants $C_1, C_2 >0$, $M > 1$ such that the cutoff resolvent $\chi R(\lambda) \chi$ continues analytically from $\imag \lambda >0$ into the set   $\{\lambda \in \mathbb{C}: |\real \lambda | > M, | \imag \lambda| < e^{-C_2 |\real \lambda |} \}$, where it satisfies the bound
\begin{equation} \label{perturbed resolv est high energy eq}
\| \chi R(\lambda) \chi \|_{L^2 \to L^2} \le e^{C_1 |\real \lambda|}.
\end{equation}
\end{proposition}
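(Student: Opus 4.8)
\emph{Overall strategy.} The bound is an application of the exponential semiclassical resolvent estimate \eqref{semiclassical cutoff}, once $L-\lambda^2$ is put into Schr\"odinger form and rescaled so that $|\real\lambda|^{-1}$ plays the role of the semiclassical parameter. I would first reduce to the case in which $\chi\equiv 1$ on a neighborhood of $\supp(c-1)$: given a general $\chi\in C_0^\infty(\mathbb{R}^n)$, choose $\psi\in C_0^\infty(\mathbb{R}^n)$ with $\psi\equiv 1$ on $\supp\chi\cup\supp(c-1)$, so that $\chi R(\lambda)\chi=\chi\psi R(\lambda)\psi\chi$ and $\|\chi R(\lambda)\chi\|_{L^2\to L^2}\le\|\chi\|_\infty^2\|\psi R(\lambda)\psi\|_{L^2\to L^2}$; hence it suffices to bound $\psi R(\lambda)\psi$ for such cutoffs.

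\emph{Reduction to a semiclassical resolvent.} Assume now $\chi\equiv 1$ near $\supp(c-1)$, so that $c^{-2}\chi=\chi c^{-2}$ as multiplication operators. Since $-c^2\Delta-\lambda^2=c^2(-\Delta-c^{-2}\lambda^2)$, for $\imag\lambda>0$ we have $R(\lambda)=(-\Delta-c^{-2}\lambda^2)^{-1}c^{-2}$. Fix a point $\lambda_0$ in the set in the statement (note $|\real\lambda_0|>M$, so $\lambda_0\notin i\mathbb{R}$), and set $h:=|\real\lambda_0|^{-1}\in(0,M^{-1})$ and $z:=h^2\lambda^2$. Then, for $\lambda$ near $\lambda_0$,
\[
h^2\big(-\Delta-c^{-2}\lambda^2\big)=-h^2\Delta-c^{-2}z=\big((-h^2\Delta+V-z)+W\big),\qquad V:=1-c^{-2},\quad W:=(1-c^{-2})(z-1).
\]
The key point is that $V$ is a \emph{fixed} compactly supported Lipschitz function independent of $\lambda$ (its support is $\supp(c-1)$, and $c^{-2}$ is Lipschitz because $c$ is Lipschitz and bounded above and below by \eqref{basic c assumptions}), so it is exactly of the type allowed in \eqref{semiclassical cutoff}. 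Consequently
\[
\chi R(\lambda)\chi=h^2\,\chi\big((-h^2\Delta+V-z)+W\big)^{-1}\chi\,c^{-2},\qquad \imag\lambda>0,\ \lambda\text{ near }\lambda_0.
\]

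\emph{Applying the estimate and absorbing the remainder.} In the set in the statement, $z=1+O(|\imag\lambda|/|\real\lambda|)$ with $|\imag\lambda|/|\real\lambda|<e^{-C_2|\real\lambda|}/M$, so $z$ lies in an exponentially thin complex neighborhood of $1$; choosing $C_2$ large enough in terms of the constants in \eqref{semiclassical cutoff}, and $M$ large, places $z$ in the region of validity of that estimate, while $h<M^{-1}$ is small. Applying \eqref{semiclassical cutoff} to the fixed potential $V$ gives $\|\chi(-h^2\Delta+V-z)^{-1}\chi\|_{L^2\to L^2}\le Ce^{C/h}$. The remainder $W$ is supported in $\supp(c-1)$ with $\|W\|_\infty\lesssim e^{-C_2|\real\lambda|}$, so inserting an auxiliary cutoff $\equiv 1$ on $\supp(c-1)\cup\supp\chi$ and running a short Neumann series — legitimate once $C_2$ exceeds (twice) the constant in \eqref{semiclassical cutoff}, since then the product of $\|W\|_\infty$ with the cutoff resolvent bound tends to $0$ — yields $\|\chi((-h^2\Delta+V-z)+W)^{-1}\chi\|_{L^2\to L^2}\le 2Ce^{C/h}$. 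Unwinding, $\|\chi R(\lambda)\chi\|_{L^2\to L^2}\le 2C\|c^{-2}\|_\infty h^2e^{C/h}=2C\|c^{-2}\|_\infty|\real\lambda|^{-1}e^{C|\real\lambda|}\le e^{C_1|\real\lambda|}$ for $M$ large and $C_1:=C+1$. The analytic continuation into the set in the statement, and the absence of poles there, then come for free: near $\lambda_0$ the right-hand side of the last display is holomorphic (the region supplied by \eqref{semiclassical cutoff} is pole-free) and agrees on $\imag\lambda>0$ with the meromorphic continuation of $\chi R(\lambda)\chi$ constructed in Section \ref{Continuation of the perturbed resolvent}, so the two coincide near $\lambda_0$; as $\lambda_0$ was arbitrary, the claim follows.

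\emph{Main obstacle.} The argument is short given \eqref{semiclassical cutoff}; what requires care is (i) the algebraic bookkeeping that isolates a $\lambda$-independent compactly supported Lipschitz potential — writing $(1-c^{-2})z=(1-c^{-2})+(1-c^{-2})(z-1)$, so that $V=1-c^{-2}$ is fixed and only the exponentially small, compactly supported $W$ remains — and (ii) calibrating the width $e^{-C_2|\real\lambda|}$ of the admissible strip to the constants in \eqref{semiclassical cutoff}, so that simultaneously $z$ stays in that estimate's region of validity and the Neumann series absorbing $W$ converges. Everything else — the two cutoff reductions and the power counting in $h$ — is routine.
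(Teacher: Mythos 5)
There is a genuine gap, and it is exactly at the point you dismiss as coming ``for free.'' Your reduction to the semiclassical estimate \eqref{semiclassical cutoff} (writing $-h^2\Delta-c^{-2}z=-h^2\Delta+V-z+W$ with $V=1-c^{-2}$ fixed and $W=(1-c^{-2})(z-1)$ exponentially small, then absorbing $W$ by a Neumann series) is sound, but only where \eqref{semiclassical cutoff} actually applies: that estimate is stated for spectral parameter $E+i\varepsilon$ with $E$ real near $E'=1$ and $\varepsilon>0$, i.e.\ for the genuine resolvent of the self-adjoint operator on one side of the real axis. It supplies no bound and no ``pole-free region'' for $\imag z<0$, so it cannot by itself justify analytic continuation of $\chi R(\lambda)\chi$ into the strip $|\imag\lambda|<e^{-C_2|\real\lambda|}$, half of which lies \emph{below} the axis. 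Worse, for $\imag\lambda<0$ your identity $\chi R(\lambda)\chi=h^2\chi((-h^2\Delta+V-z)+W)^{-1}\chi\,c^{-2}$ involves the literal resolvent at a point $z$ with $\imag z<0$, which is \emph{not} the analytic continuation from $\imag z>0$ (the two differ across the branch cut along the spectrum), so the formula does not even define the object the Proposition is about in the lower half-plane. No choice of $C_2$, however large, fixes this: the obstruction is the side of the axis, not the thinness of the neighborhood.

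What your argument does prove is the bound in the closed upper half-plane at high energy, hence (since the continuation has no real poles, Section \ref{Continuation of the perturbed resolvent}) the on-axis estimate \eqref{est on axis}; this part essentially coincides with the paper's proof of \eqref{est on axis} via the operators $A(\lambda)$, $B(\lambda)$, $D(\lambda)$. The missing ingredient is the paper's Lemma \ref{continue to ball}, a Vodev-type continuation argument: it uses the explicit analytic continuation of the \emph{free} cutoff resolvent and its polynomial bounds \eqref{nonsemiclassical resolv est away zero}, \eqref{nonsemiclassical resolv est away zero Cauchy} in a fixed strip below the axis, together with a resolvent-identity decomposition into five terms and an absorption step, to propagate the exponential bound at a real point $\lambda_0$ into the disk $D_{\lambda_0}(e^{-C_2|\lambda_0|})$ and to rule out poles there. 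Some argument of this kind (trading on the known continuation of $R_0$ below the axis) is indispensable; without it your proof establishes \eqref{perturbed resolv est high energy eq} only for $\imag\lambda\ge 0$.
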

To prove Proposition \ref{perturbed resolv est high energy}, we use Lemmas \ref{continue to ball} and \ref{semiclassical resolv est} below. Recall from Section \ref{Continuation of the perturbed resolvent} that the continued resolvent $\chi R(\lambda) \chi$ has no poles on $\mathbb{R} \setminus \{0\}$. Lemma \ref{continue to ball} asserts that, if there  is an exponential resolvent bound on the real axis at high energy, then the continued resolvent is in fact analytic in exponentially small strips below the real axis. This is a is a non-semiclassical version of a continuation argument of Vodev \cite[Theorem 1.5]{vod14}.

\begin{lemma} \label{continue to ball}
Let $\chi \in C_0^\infty(\mathbb{R}^n)$. Suppose that there exist $C > 0$ and $M > 1$ such that whenever  $\lambda_0 \in \mathbb{R} \setminus [-M, M]$, the continuation of $\chi R(\lambda) \chi$ from $\imag \lambda > 0$ to $\mathbb{C}\setminus i \mathbb{R}_-$ satisfies
\begin{equation} \label{est on axis}
\| \chi R(\lambda_0)\chi \|_{L^2 \to L^2} \le e^{C|\lambda_0 |}.
\end{equation}
Then there exist $C_1, C_2 >0$ such that for each $\lambda_0 \in \mathbb{R} \setminus [-M,M]$, the continued cutoff resolvent is analytic in the disk $D_{\lambda_0}(e^{-C_2 | \lambda_0 |})$, where it has the estimate
\begin{equation} \label{desired est}
\|\chi R(\lambda) \chi \|_{L^2 \to L^2} \le e^{C_1|\lambda_0|}.
\end{equation}
\end{lemma}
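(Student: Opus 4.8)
The plan is to reduce this to a single-complex-variable statement about a holomorphic function on a disk, then apply a Phragmén--Lindelöf / three-lines type estimate together with the maximum principle. Fix $\lambda_0 \in \mathbb{R} \setminus [-M,M]$ and consider the scalar-valued holomorphic function $f(\lambda) \defeq \langle \chi R(\lambda)\chi g, h\rangle_{L^2}$ for arbitrary $g,h \in L^2(\mathbb{R}^n)$ with $\|g\|_{L^2} = \|h\|_{L^2} = 1$; the desired operator estimate \eqref{desired est} follows by taking a supremum over such $g,h$. The key input besides \eqref{est on axis} is the resolvent identity relating $R(\lambda)$ to the free resolvent $R_0(\lambda)$, which near $|\real\lambda|$ large and $\imag\lambda$ small gives polynomial control $\|\chi R(\lambda)\chi\|_{L^2\to L^2} \lesssim |\lambda|^{-1}$ away from the poles of the continuation — but of course the continuation of the \emph{perturbed} resolvent may have poles (resonances) in the lower half-plane, which is precisely the obstruction. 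So directly the perturbed resolvent is not holomorphic on a fixed-size box below $\lambda_0$.

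The standard device (Vodev's argument, adapted to the non-semiclassical setting) is to multiply by a function that kills the possible poles while not destroying the bounds. Concretely, one writes $\chi R(\lambda)\chi = \chi R_0(\lambda)\chi (I + K(\lambda))^{-1}$ (as in the low-energy proof, with $K(\lambda) = (1-c^{-2})\lambda^2 \chi R_0(\lambda)$), so the poles of the continuation come from the points where $I + K(\lambda)$, equivalently $I + K(\lambda)\chi$ acting on a finite-rank-modulo-compact piece, fails to be invertible. The number of such poles in a disk of radius $O(1)$ around $\lambda_0$ is bounded by $O(|\lambda_0|^n)$ or so, via a standard counting bound for zeros of the relevant Fredholm determinant coming from the exponential a priori bound on $\|(I+K(\lambda)\chi)^{-1}\|$ in the upper half-plane combined with Jensen's inequality. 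Let $p_1,\dots,p_N$ be these poles in, say, $D_{\lambda_0}(2e^{-C_2|\lambda_0|})$ for a $C_2$ to be fixed, and form the Blaschke-type product $P(\lambda) = \prod_j (\lambda - p_j)$. Then $P(\lambda)\chi R(\lambda)\chi$ is holomorphic on a slightly larger disk, and on the real-axis arc inside that disk it still obeys $\|P(\lambda)\chi R(\lambda)\chi\| \le (\text{diam})^N e^{C|\lambda_0|}$; since $N = O(|\lambda_0|^n)$ and $\text{diam} = O(e^{-C_2|\lambda_0|})$, one must choose the radii and $C_2$ so that the product of these competing exponentials still yields $e^{O(|\lambda_0|)}$ — this is the delicate bookkeeping step.

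The key steps in order: (1) recall that on $\imag\lambda > 0$ (the physical half-plane) one has the cheap global-in-that-region bound $\|\chi R(\lambda)\chi\|_{L^2\to L^2} \le C/\imag\lambda$ or similar, hence in particular a bound on a horizontal segment just above $\lambda_0$; (2) use \eqref{est on axis} on the real axis; (3) count resonances in a small disk below $\lambda_0$ using the factorization and a Jensen-type argument (this is where the polynomial-in-$|\lambda_0|$ count $N \lesssim |\lambda_0|^n$ enters); (4) multiply by $P(\lambda)$ to remove the poles, obtaining a holomorphic function on a genuine disk $D_{\lambda_0}(\rho)$ with $\rho$ of size $e^{-C_2|\lambda_0|}$; (5) apply the maximum modulus principle (or a harmonic-majorant/two-constants argument) on this disk: the boundary of $D_{\lambda_0}(\rho)$ consists of an upper part where the $1/\imag\lambda \lesssim e^{C_2|\lambda_0|}$ bound holds and a lower part, but after multiplying by $P$ and using subharmonicity of $\log|P(\lambda)\chi R\chi|$ one pushes the real-axis estimate \eqref{est on axis} into the disk; (6) divide back by $P(\lambda)$, picking up a factor $\prod_j |\lambda - p_j|^{-1} \le \rho^{-N} = e^{C_2 N |\lambda_0|}$, absorb everything into $e^{C_1|\lambda_0|}$ by choosing $C_2$ small relative to $C$ and $N$'s implicit constant, and shrink the disk by a factor (say to radius $e^{-C_2|\lambda_0|}$ with a slightly larger $C_2$, or rename constants) so that the final analyticity-and-estimate claim holds on $D_{\lambda_0}(e^{-C_2|\lambda_0|})$.

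The main obstacle I expect is step (3)--(6), the resonance-counting-plus-constant-juggling: one needs the number of poles of the continued perturbed resolvent in a disk of radius $O(1)$ (or $O(e^{-C_2|\lambda_0|})$) around $\lambda_0$ to be at most polynomial in $|\lambda_0|$, which requires exploiting the black-box / finite-rank structure and an a priori exponential bound in the upper half-plane to run Jensen's inequality, and then one must verify that the competing exponential rates $e^{C|\lambda_0|}$ (the size), $e^{-C_2|\lambda_0|}$ (the radius), and $e^{C_2 N|\lambda_0|}$ (the cost of removing poles) can be balanced so that the outcome is still $e^{C_1|\lambda_0|}$ for some finite $C_1$ and on a disk whose radius is only exponentially small — not, e.g., forcing the radius down to $e^{-|\lambda_0|^n}$, which would be useless. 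Once the counting bound and the choice of $C_2$ are in hand, the rest is a routine application of the maximum principle to the pole-free holomorphic function $P(\lambda)\langle \chi R(\lambda)\chi g, h\rangle$.
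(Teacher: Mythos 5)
Your approach has a genuine gap, and it is not the route the paper takes. The central assertion of the lemma is that the continued resolvent has \emph{no poles} in the exponentially small disk $D_{\lambda_0}(e^{-C_2|\lambda_0|})$; your scheme of multiplying by $P(\lambda)=\prod_j(\lambda-p_j)$ and later dividing back can never establish this, since after division you only control $\|\chi R(\lambda)\chi\|$ at points whose distance to the $p_j$ is bounded below, and the bound degenerates like $\prod_j|\lambda-p_j|^{-1}$ as you approach a pole. Absent a separate argument excluding the $p_j$ from the disk altogether, the conclusion ``analytic in $D_{\lambda_0}(e^{-C_2|\lambda_0|})$ with the bound \eqref{desired est}'' does not follow. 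Moreover, the maximum-principle step as described is unsupported: subharmonicity of $\log|P\,\langle\chi R(\lambda)\chi g,h\rangle|$ does not let you ``push'' the one-dimensional real-axis estimate \eqref{est on axis} into a two-dimensional disk; any two-constants/harmonic-measure argument requires an a priori bound on the \emph{lower} portion of the boundary of the disk, i.e. in the lower half-plane, which is exactly what is unknown. Supplying it would require the Fredholm-determinant factorization machinery (Tang--Zworski/Stefanov style), which you only gesture at, and even then the bookkeeping does not close as stated: with $N\lesssim|\lambda_0|^{n}$ poles, the division cost you quote, $e^{C_2N|\lambda_0|}$, is of size $e^{C_2|\lambda_0|^{n+1}}$ and cannot be absorbed into $e^{C_1|\lambda_0|}$ by shrinking $C_2$.

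The paper's proof avoids all of this with a direct perturbation argument (a non-semiclassical version of Vodev's continuation lemma): using the resolvent identity and commutators with a cutoff $\chi_1\equiv 1$ on $\supp(c-1)$, it writes $\chi R(\lambda)\chi-\chi R(\lambda_0)\chi$ as the five terms in \eqref{sum of five terms}; the Cauchy-type estimate \eqref{nonsemiclassical resolv est away zero Cauchy} for the free resolvent makes each term either $O(|\lambda-\lambda_0|e^{K|\lambda_0|})$ or that factor times $\|\chi R(\lambda)\chi\|$. For $|\lambda-\lambda_0|<e^{-C_2|\lambda_0|}$ with $C_2$ large, the self-referential terms are absorbed, giving a uniform bound $e^{C_1|\lambda_0|}$ at every non-pole point of the disk; since a meromorphic family that stays uniformly bounded near a point cannot have a pole there, analyticity in the whole disk follows at once. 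If you want to salvage your outline, you would need both the a priori lower-half-plane bound (via determinant factorization and resonance counting) and a separate mechanism converting the real-axis bound into a pole-free region --- at which point you have essentially reconstructed a heavier version of the argument the paper carries out in a few lines.
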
 
\begin{proof}
Let $\chi_1 \in C_0^\infty(\mathbb{R}^n)$ have the property that $\chi_1 \equiv 1$ on the support of $c-1$. Without loss of generality, we may assume that $\chi \equiv 1$ on the support of $\chi_1$. For $\imag \lambda$, $\imag \mu >0$, we have the resolvent identity
\begin{gather}
R(\lambda) - R(\mu) = (\lambda^2 - \mu^2)R(\lambda) R(\mu) \implies \nonumber \\
R(\lambda) - R(\mu) = (\lambda^2 - \mu^2)R(\lambda) \chi_1(2 - \chi_1) R(\mu) +(\lambda^2 - \mu^2)R(\lambda) (1 - \chi_1)^2 R(\mu), \label{helper 1}
\end{gather}
The first equality implies the second because $(1 - \chi_1)^2 + \chi_1(2 - \chi_1) = 1$. 

We also compute
\begin{equation} \label{helper 2}
R(\lambda)(1 - \chi_1) - (1 - \chi_1) R_0(\lambda) = R(\lambda) [\chi_1, \Delta] R_0(\lambda), \qquad \imag \lambda >0,
\end{equation}
\begin{equation} \label{helper 3}
(1 - \chi_1)R(\mu) - R_0(\mu)(1 - \chi_1) = R_0(\mu) [\Delta, \chi_1] R(\lambda), \qquad \imag \mu >0.
\end{equation}
Using \eqref{helper 1}, \eqref{helper 2}, and \eqref{helper 3}, we express $\chi R(\lambda) \chi - \chi R(\mu)\chi$ as a sum of five operators which we denote by $T_k(\lambda, \mu)$, $k = 1, \dots ,5$. 
\begin{equation}
\begin{aligned} \label{sum of five terms}
 \chi R(\lambda) \chi - \chi R(\mu)\chi & = (\lambda^2 - \mu^2) (\chi R(\lambda) \chi) (\chi_1(2 - \chi_1)) (\chi R(\mu) \chi)
\\& + (1 -\chi_1) \left[ \chi R_0(\lambda) \chi - \chi R_0(\mu) \chi \right](1 - \chi_1) 
\\& +(1 - \chi_1) \left[ \chi R_0(\lambda) \chi - \chi R_0(\mu) \chi \right]\left[\Delta, \chi_1\right](\chi R(\mu) \chi) 
\\& -(\chi R(\lambda) \chi)\left[\Delta, \chi_1\right] \left[ \chi R_0(\lambda) \chi - \chi R_0(\mu) \chi \right](1 - \chi_1) 
\\&  -(\chi R(\lambda) \chi)\left[\Delta, \chi_1\right] \left[ \chi R_0(\lambda) \chi - \chi R_0(\mu) \chi \right]\left[\Delta, \chi_1\right](\chi R(\mu) \chi).\\
& = \sum_{k=1}^5T_k(\lambda, \mu).
\end{aligned}
\end{equation}
This formula continues to hold after continuing both $\lambda$ and $\mu$ to $\mathbb{C} \setminus i \mathbb{R}$.

For $z \in \mathbb{C}$ and $r > 0$, let $D_r(z)$ denote the disk $\{ w \in \mathbb{C}: |w - z| < r\}$. To proceed, take  $\mu = \lambda_0$. We  bound the $L^2(\mathbb{R}^n) \to L^2(\mathbb{R}^n)$ norm of each $T_k(\lambda, \lambda_0)$ for $ \lambda \in D_{\lambda_0}(e^{-C_2|\lambda_0|})$, where the precise value of $C_2 > 0$ will be determined at the end of the proof. Suppose that $ \lambda \in D_{\lambda_0}(e^{-C_2|\lambda_0|})$ is not a pole of $\chi R(\lambda) \chi$. Using \eqref{nonsemiclassical resolv est away zero Cauchy}  along with the fundamental theorem of calculus for line integrals, we have, for $|\alpha_1| + |\alpha_2| \le 2$, 
\begin{equation*}
\begin{aligned} 
\|   \partial_x^{\alpha_1} \chi R_0(\lambda) \chi \partial_x^{\alpha_2}  -  \partial_x^{\alpha_1} \chi  R_0(\lambda_0) \chi \partial_x^{\alpha_2} &\|_{L^2 \to L^2 } \le \\ 
&C_M |\lambda - \lambda_0| \sup_{|\lambda - \lambda_0| < e^{-C_2 |\lambda_0|}}{|\lambda|^{ |\alpha_1| + |\alpha_2|-1}}, \quad \lambda \in D_{\lambda_0}(e^{-C_2|\lambda_0|}). 
\end{aligned}
\end{equation*}
Therefore, for some $K >0$ large enough,
\begin{equation} 
\label{difference of cutoff resolvents lambda}
\|   \partial_x^{\alpha_1} \chi R_0(\lambda) \chi \partial_x^{\alpha_2}  - \partial_x^{\alpha_1} \chi R_0(\lambda_0)\chi \partial_x^{\alpha_2} \|_{L^2 \to L^2 } \le |\lambda - \lambda_0 | e^{K|\lambda_0|}, \qquad \lambda \in D_{\lambda_0}(e^{-C_2|\lambda_0|}).
\end{equation}

Using \eqref{nonsemiclassical resolv est away zero}, \eqref{difference of cutoff resolvents lambda}, and further increasing $K >0$ if necessary, we conclude that for $\lambda \in D_{\lambda_0}(e^{-C_2|\lambda_0|})$
\begin{gather}  
 \|T_k(\lambda, \lambda_0)\|_{L^2 \to L^2} \le  |\lambda - \lambda_0 |e^{K|\lambda_0|}, \quad k = 2,3, \nonumber \\ 
 \|T_k(\lambda, \lambda_0)\|_{L^2 \to L^2} \le  |\lambda - \lambda_0 |e^{K|\lambda_0|} \| \chi R(\lambda) \chi \|_{L^2 \to L^2}, \quad k = 1,4,5. \nonumber  
\end{gather}
Hence, by \eqref{sum of five terms} we arrive at  
\begin{equation*}
\|\chi R(\lambda) \chi \|_{L^2 \to L^2} \le 3| \lambda - \lambda_0| e^{K|\lambda_0|} \|\chi R(\lambda) \chi \|_{L^2 \to L^2} + 2e^{K |\lambda_0|}.
\end{equation*}

Now, require $C_2$ to be large enough so that 
\begin{equation*}
3|\lambda - \lambda_0|e^{K|\lambda_0|} < \frac{1}{2},  
\end{equation*}
in which case there is a $C_1 > 0$ so that 
\begin{equation*}
\|\chi R(\lambda) \chi \|_{L^2 \to L^2} < e^{C_1 |\lambda_0|}, \qquad \lambda \in D_{\lambda_0}(e^{-C_2|\lambda_0|}).
\end{equation*}
We have shown then, that $\chi R(\lambda) \chi$ is uniformly bounded in $D_{\lambda_0}(e^{-C_2|\lambda_0|})$ when $\lambda$ is not a pole. Therefore, we conclude that $\chi R(\lambda) \chi$ has no poles in $D_{\lambda_0}(e^{-C_2|\lambda_0|})$.\\
\end{proof}

With Lemma \ref{continue to ball} now in hand, we just need to show \eqref{est on axis}, which will complete the proof of Proposition \ref{perturbed resolv est high energy}. To establish \eqref{est on axis}, we convert an estimate for the semiclassical cutoff resolvent
\begin{equation*}
\chi(-h^2 \Delta + V -  z)^{-1}\chi, \quad V \in L^\infty_{\text{comp}}(\mathbb{R}^n), \quad h > 0,
\end{equation*}
appearing in \cite{da,s}, into a suitable statement about $\chi R(\lambda) \chi$.

Essentially, these lemmas convert results about the semiclassical cutoff resolvent 
 
 \begin{lemma} \label{semiclassical resolv est}
Let $n \ge 2$. Suppose that $V \in L^\infty_{\text{\emph{comp}}}(\mathbb{R}^n)$ is a real-valued function such that $\nabla V$, defined in the sense of distributions, belongs to  $(L^\infty(\mathbb{R}^n))^n$. Let $E' > 0$ and $\chi \in C_0^\infty(\mathbb{R}^n)$ be fixed. Let $\delta > 0$ so that $[E' - \delta , E' + \delta] \subseteq (0, \infty)$. Then there exist constants $C, h_0 > 0$ so that
\begin{equation} \label{semiclassical cutoff} \|\chi (-h^2\Delta + V - E - i\varepsilon)^{-1} \chi\|_{L^2 \to L^2} \le e^{\frac{C}{h}}
\end{equation}
for all $E \in  [E' - \delta , E' + \delta]$, $h \in (0, h_0]$, and $\varepsilon >0$.
 
 \end{lemma}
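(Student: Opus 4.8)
The plan is to deduce Lemma \ref{semiclassical resolv est} from the exponential semiclassical cutoff resolvent estimate of Datchev \cite{da} and the author \cite{s}, which is stated for the operator $-h^2\Delta + V$ with $V \in L^\infty_{\text{comp}}$ having $\nabla V \in (L^\infty)^n$, at fixed positive energy with a small spectral window. The only work is bookkeeping: the cited estimate is typically phrased for a single energy level $E'$ (or for $\real z$ in a fixed compact subinterval of $(0,\infty)$ and $\imag z$ in a range), and we want it uniformly for $E$ ranging over $[E'-\delta, E'+\delta]$, $h \in (0,h_0]$, and \emph{all} $\varepsilon > 0$. First I would recall the precise statement from \cite{da,s}: there exist $C_0, h_0 > 0$ such that $\|\chi(-h^2\Delta + V - E - i\varepsilon)^{-1}\chi\|_{L^2\to L^2} \le e^{C_0/h}$ for $E$ in a fixed neighborhood of $E'$, $0 < h \le h_0$, and $\varepsilon > 0$. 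Since $[E'-\delta, E'+\delta] \subseteq (0,\infty)$ is compact, one may cover it by finitely many such neighborhoods (or simply observe the cited result already allows $E$ in a compact interval), and take $C$ to be the maximum of the finitely many constants $C_0$ and $h_0$ the minimum of the corresponding $h_0$'s; this yields \eqref{semiclassical cutoff} uniformly in $E \in [E'-\delta, E'+\delta]$.

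The one point deserving care is the uniformity in $\varepsilon > 0$, including large $\varepsilon$. For $\varepsilon$ bounded away from zero, say $\varepsilon \ge \varepsilon_1$ for a fixed $\varepsilon_1 > 0$, one does not need the deep estimate at all: $-h^2\Delta + V$ is self-adjoint and bounded below (uniformly in $h \le h_0$, since $V$ is bounded), so $\|(-h^2\Delta + V - E - i\varepsilon)^{-1}\|_{L^2\to L^2} \le 1/\varepsilon \le 1/\varepsilon_1$, and then $\|\chi(\cdots)^{-1}\chi\| \le \|\chi\|_\infty^2/\varepsilon_1$, which is $\le e^{C/h}$ for any $C>0$ once $h_0$ is small. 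So the content is entirely in the regime $0 < \varepsilon < \varepsilon_1$, which is exactly the regime handled by \cite{da,s}. I would also double-check that the spectral parameter $z = E + i\varepsilon$ with $E \in [E'-\delta,E'+\delta]$ and $0 < \varepsilon < \varepsilon_1$ stays within the region where the cited estimate applies; shrinking $\varepsilon_1$ if necessary keeps $z$ in a fixed compact neighborhood of $E'$ inside $\{\real z > 0\}$, which is all that is required.

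Having Lemma \ref{semiclassical resolv est}, the passage to \eqref{est on axis} (used in proving Proposition \ref{perturbed resolv est high energy}) goes via the substitution $z = h^2\lambda^2$, $V = 1 - c^{-2}$ rescaled appropriately, or more directly by writing $L - \lambda^2 = -c^2\Delta - \lambda^2 = c^2(-\Delta - c^{-2}\lambda^2)$ and conjugating; but that step belongs to the proof of Proposition \ref{perturbed resolv est high energy} rather than to Lemma \ref{semiclassical resolv est} itself. The main (and only real) obstacle in proving the lemma as stated is therefore the uniformity claim over the full range of parameters, and as indicated above this splits cleanly into the trivial large-$\varepsilon$ regime and the cited small-$\varepsilon$ estimate, with a finite cover handling the compact energy interval. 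No genuinely new analysis is needed; the lemma is a packaging of \cite{da,s} in the form convenient for Section \ref{resolvent estimate at high energy}.
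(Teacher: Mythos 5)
Your proposal matches the paper exactly: Lemma \ref{semiclassical resolv est} is not proved in the paper but is quoted directly from Datchev \cite{da} (for $n\ge 3$) and \cite{s} (for $n=2$), which is precisely the reduction you make. Your additional bookkeeping — the trivial bound $\|(-h^2\Delta+V-E-i\varepsilon)^{-1}\|\le \varepsilon^{-1}$ by self-adjointness for $\varepsilon$ bounded away from zero, and compactness of $[E'-\delta,E'+\delta]$ for uniformity in $E$ — is correct and consistent with how the cited estimates are stated.
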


By setting $V_c \defeq 1 - c^{-2} \in L_{\text{comp}}^\infty(\mathbb{R}^n)$ and identifying $h = |\real \lambda|^{-1}$, we translate \eqref{semiclassical cutoff}
into estimates for $\chi R(\lambda) \chi$ when $|\real \lambda|$ is large.  
\begin{proof}[Proof of \eqref{est on axis}]

 Set $V_c \defeq 1- c^{-2}$ and $ \mathcal{O} \defeq \{ \lambda \in \mathbb{C} : \real \lambda \neq 0, \text{ } \imag \lambda > 0 \}$. Without loss of generality, take $\chi \equiv 1$ on $\supp V_c$.  Define on $\mathcal{O}$ the following families of operators $L^2(\mathbb{R}^n) \to L^2(\mathbb{R}^n)$ with domain $H^2(\mathbb{R}^n)$,
\begin{gather}
A(\lambda) \defeq -(\real \lambda)^{-2} \Delta + V_c + (\imag \lambda)^2(\real \lambda)^{-2} c^{-2} - i2 \imag \lambda(\real \lambda)^{-1}c^{-2} - 1, \nonumber \\
B(\lambda) \defeq -(\real \lambda)^{-2} \Delta + V_c - i2 \imag \lambda(\real \lambda)^{-1} + (\imag \lambda)^2(\real \lambda)^{-2} - 1, \nonumber 
\end{gather}
Furthermore, define on $\mathcal{O}$ the family $L^2(\mathbb{R}^n) \to L^2(\mathbb{R}^n)$,
\begin{equation*}
D(\lambda) \defeq (\imag \lambda)^2(\real \lambda)^{-2}V_c - i2 \imag \lambda(\real \lambda)^{-1}V_c.
\end{equation*}

We first subtract, 
\begin{equation*}
B(\lambda) - A(\lambda) = D(\lambda).
\end{equation*}
Composing with inverses, we get
\begin{equation*} 
A(\lambda)^{-1} - B(\lambda)^{-1} = B(\lambda)^{-1} D(\lambda)   A(\lambda)^{-1} \implies( I -  B(\lambda)^{-1}D(\lambda))A(\lambda)^{-1} = B(\lambda)^{-1},
\end{equation*}
Multiplying on the left and right by $\chi$ and noticing that $D(\lambda) = \chi D(\lambda) \chi$, we arrive at
\begin{equation} \label{prelim resolv id A and B}
(I - \chi B(\lambda)^{-1} \chi D(\lambda)) \chi A(\lambda)^{-1} \chi = \chi B(\lambda)^{-1} \chi, \qquad \lambda \in \mathcal{O}. 
\end{equation}

Setting $E'= 1$ and $h =|\real \lambda|^{-1}$, we apply  Lemma \ref{semiclassical resolv est} to $B(\lambda)^{-1}$. This gives $M, C, \delta > 0$ so that 
\begin{equation}\label{est for B}
\| \chi B(\lambda)^{-1} \chi \|_{L^2 \to L^2} \le e^{C |\real \lambda|}, \qquad |\real \lambda| > M, \text{ } 0 < |\imag \lambda| < \delta .
\end{equation}
Moreover, by decreasing $\delta$ if necessary, it holds that 
\begin{equation} \label{est for D}
\|D(\lambda)\|_{L^2 \to L^2} < \frac{1}{2}e^{-C|\real \lambda|}, \qquad |\real \lambda| > M, \text{ }  0 < |\imag \lambda| < \delta .
\end{equation}
Therefore, we can invert $(I - \chi B(\lambda)^{-1} \chi D(\lambda))$ by a Neumann series. From \eqref{prelim resolv id A and B}, \eqref{est for B}, and \eqref{est for D} we get 
\begin{equation} \label{Neumann A and B}
\chi A(\lambda)^{-1} \chi = \left(\sum_{k=0}^\infty (\chi B(\lambda)^{-1} \chi D(\lambda))^k \right) \chi B(\lambda)^{-1} \chi, \quad  |\real \lambda | > M, \text{ } 0< |\imag \lambda| <  \delta.  
\end{equation}
Finally, we notice that
\begin{equation*}
\chi R(\lambda) \chi = (\real \lambda)^{-2}  \chi A(\lambda)^{-1}  \chi c^{-2}, \qquad \lambda \in \mathcal{O}.
\end{equation*}
Then \eqref{est on axis} follows from the estimates \eqref{est for B} and \eqref{est for D} along with the identity \eqref{Neumann A and B}.\\
\end{proof}


\section{Statement of the main resolvent estimate} \label{statement of main resolvent estimate}

The objective in this section is to prove Proposition \ref{continuation by comp}. It states that when the resolvent $R_B(\lambda)$ acts on initial data in $H_R$, it continues analytically from $\imag \lambda >0$ to a region in the lower half plane with estimates on the norm there. These properties follow from the resolvent estimates proved for $\chi R(\lambda) \chi: L^2 \to L^2$ in the previous two sections.

To keep our notation manageable, we set
\begin{equation*}
(L_R^2)^{n+1} \defeq (L^2(B(0,R))^n \oplus L_c^2(B(0,R)). 
\end{equation*}
  Define $S_R : H \to (L_R^2)^{n+1} $ by $S_R(u_0, u_1) = (\nabla u_0 , u_1)$. Note $\| S_R \|_{H \to (L_R^2)^{n+1} } =1$. Also, throughout this section, $a \lesssim b$ means that $ a \le Cb$ for some $ C >0$ that does not depend on $\lambda$. 

\begin{proposition} \label{continuation by comp}
Let $R_1, R_2 >0$. There exist $C_1, C_2 > 0$, $M > 1$, and $0 < \varepsilon_0 < 1$ so that for all $(u_0, u_1) \in H_{R_1}$, 
$S_{R_2}R_B(\lambda)(u_0,u_1)$ continues analytically from $\imag \lambda > 0$  to the region
\begin{equation} \label{combined res free region}
\begin{aligned}
\Theta \defeq &\{ \lambda \in \mathbb{C} : |\real \lambda | > M, \text{ }\imag \lambda > - e^{-C_2|\real \lambda|} \} \cup \\ 
&\{ \lambda \in \mathbb{C} : 0 < |\real \lambda| \le M, \text{ }  \imag \lambda > - e^{C_2M}\}.
\end{aligned}
\end{equation}
One possible $\Theta$ is depicted in Figure \ref{resfree}. Furthermore, $S_{R_2}R_B(\lambda)(u_0,u_1)$ obeys the estimate 
\begin{equation} \label{RB est}
\|S_{R_2}R_B(\lambda)(u_0, u_1) \|_{(L_R^2)^{n+1}} \lesssim  \begin{cases} e^{C_1|\real \lambda|} & \lambda \in \Theta \cap \{ |\real \lambda | > \varepsilon_0\}, \\
 1 + |\lambda|^{n-2}|\log \lambda| & \lambda \in \Theta \cap \{ 0< |\real \lambda | \le \varepsilon_0\} .
\end{cases}   
\end{equation} 
\end{proposition}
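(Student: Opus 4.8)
The plan is to reduce everything to the $L^2 \to L^2$ cutoff resolvent estimates already proved in Propositions~\ref{perturbed resolv est high energy} and \ref{desired exp prop}, using the algebraic relation between the resolvent $R_B(\lambda)$ of the first order operator $B$ and the resolvent $R(\lambda) = (L - \lambda^2)^{-1}$ of the scalar operator $L = -c^2\Delta$. First I would write out $R_B(\lambda) = (B - \lambda)^{-1}$ explicitly as a $2\times 2$ matrix in terms of $R(\lambda)$: solving $(B - \lambda)(v_0, v_1) = (u_0, u_1)$ gives $v_0 = -R(\lambda)(\lambda u_0 + i u_1)$ and $v_1 = -iLR(\lambda)(\lambda u_0 + iu_1) - iu_0 = -i(I + \lambda^2 R(\lambda))u_0 + \lambda R(\lambda)u_1$ (up to signs and the precise convention for $B$), so every entry is built from $R(\lambda)$, $\lambda R(\lambda)$, and $\lambda^2 R(\lambda)$ applied to the components of the initial data. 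Applying $S_{R_2}$ then produces $\nabla v_0$ and $v_1$ localized to $B(0,R_2)$, so what must be controlled is $\chi_2 \nabla R(\lambda)(\cdots)$ and $\chi_2(I + \lambda^2 R(\lambda))(\cdots)$ for a cutoff $\chi_2 \equiv 1$ on $B(0,R_2)$, with input supported in $B(0,R_1)$.

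The key technical point, which the introduction flags as the main innovation, is that the input $u_0$ lies only in $\dot H^1(B(0,R_1))$, not in $L^2$, so I cannot directly feed it into the $L^2 \to L^2$ bounds. The remedy is the Poincar\'e inequality \eqref{Poincare}: since $\supp u_0 \subseteq B(0,R_1)$, one has $\|u_0\|_{L^2} \le C_{R_1}\|\nabla u_0\|_{L^2} = C_{R_1}\|(u_0,u_1)\|$-comparable, so $u_0 \in L^2$ with norm controlled by its $\dot H^1$ norm; likewise $u_1 \in L^2_c$ directly. This lets me replace the inputs by $L^2$ inputs at the cost of a fixed constant depending on $R_1$. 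On the output side, I must pass from $L^2$ bounds on $\chi_2 R(\lambda)\chi_1$ to bounds on $\chi_2 \nabla R(\lambda)\chi_1$; here I would either insert the $\partial_x^{\alpha}$ derivatives into the free resolvent estimate \eqref{nonsemiclassical resolv est away zero} (for the high-energy regime, propagated through the Lemma~\ref{continue to ball} argument) and use the expansion \eqref{free resolv E expansion} together with elliptic regularity \eqref{elliptic L2 to H2} for the low-energy regime, or more cleanly use that $R(\lambda): L^2 \to H^2$ boundedly (via \eqref{elliptic L2 to H2}) so that a cutoff of $\nabla R(\lambda)$ costs at most one extra power of $|\lambda|$, which is absorbed into the exponential at high energy and into the stated polynomial-times-log at low energy. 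The term $\lambda^2 \chi_2 R(\lambda)\chi_1$ and the bare term $\chi_2 u_0$ (which after differentiation contributes $\nabla u_0$ itself, harmless since it is bounded by the $H$-norm) are handled the same way; the extra factors of $\lambda$ and $\lambda^2$ are why the low-energy bound is stated as $1 + |\lambda|^{n-2}|\log\lambda|$ rather than something blowing up, after one checks $|\lambda|^2 \cdot |\lambda|^{n-2}|\log\lambda| \lesssim 1 + |\lambda|^{n-2}|\log\lambda|$ on $Q_{\varepsilon_0}$ for $n \ge 2$ — wait, for $n=2$ this needs $|\lambda|^2|\log\lambda| \lesssim 1$, true near $0$, and for general $n$ the dominant contribution is the $E_1(\lambda)$ part which gives the constant $1$, while the correction is lower order.

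For the analytic continuation claim, I would argue that each matrix entry of $S_{R_2}R_B(\lambda)(u_0,u_1)$, after the reductions above, is a finite sum of terms of the form (entire operator-valued function) composed with $\chi_2 R(\lambda)\chi_1$ or its derivatives, and these continue analytically from $\imag\lambda > 0$ exactly to the union of the high-energy region from Proposition~\ref{perturbed resolv est high energy} and the low-energy region $Q_{\varepsilon_0}$ from Proposition~\ref{desired exp prop}, patched together with the fact (Section~\ref{Continuation of the perturbed resolvent}) that there are no real poles away from $0$; enlarging $C_2$ and shrinking $\varepsilon_0$ if necessary, the intersection of these regions with a neighborhood of the real axis contains $\Theta$ as defined in \eqref{combined res free region}. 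The one genuine subtlety worth being careful about is the interface $|\real\lambda| \approx \varepsilon_0$ and $|\real\lambda| \approx M$: between $\varepsilon_0$ and $M$ the resolvent is analytic on the closed real segment (no poles) hence analytic and bounded in a fixed strip below it, so one simply takes the strip half-width there to be a constant, namely $e^{-C_2 M}$, consistent with the second set in \eqref{combined res free region}. I expect the main obstacle to be bookkeeping the powers of $\lambda$ correctly through the matrix entries so that the two cases in \eqref{RB est} come out exactly as stated — in particular verifying that the $\lambda^2 R(\lambda)$ term does not spoil the low-energy bound — rather than any conceptual difficulty, since all the hard analytic input is already contained in the two preceding propositions.
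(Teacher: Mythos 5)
Your proposal follows essentially the same route as the paper: you invert $B-\lambda$ on $H_{R_1}$ to obtain the matrix of scalar resolvents (the paper's $\mathcal{M}_R(\lambda)$ in \eqref{defn of matrix op}--\eqref{resolv matrix id wave op}), use the Poincar\'e inequality \eqref{Poincare} to treat the $\dot{H}^1(B(0,R_1))$ input as $L^2$ data, patch the low- and high-energy $L^2 \to L^2$ bounds across the intermediate real segment via the absence of real poles plus compactness (the paper's Lemma \ref{piece together resolvent estimates}, which also uses the spectral theorem to cover $\imag \lambda$ large and positive), and control the gradient component through free-resolvent derivative bounds, which the paper implements with the identity \eqref{gradient identity with cutoffs}. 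One caution: of your two suggested ways to bound $\nabla \chi R(\lambda)\chi$, only the first is viable as stated, since \eqref{elliptic L2 to H2} carries an uncontrolled $\lambda$-dependent constant and is asserted only for $\lambda^2 \notin \mathbb{R}_+$, so it cannot by itself yield the quantitative ``one extra power of $|\lambda|$'' on the continued region.
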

\begin{figure}[h]
\labellist
\small 
\pinlabel $M$  at 88 20
\pinlabel $-M$  at 16 20
\pinlabel $\varepsilon_0$ at 50 24
\pinlabel $\imag\lambda\text{ $=$ }-e^{-C_2|\real \lambda |}$ at 105 10
\endlabellist
\includegraphics[width =12cm]{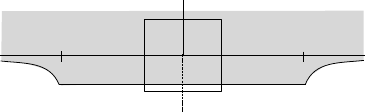}
 \caption{The region $\Theta$.}
 \label{resfree}
\end{figure}

To prove Proposition \ref{continuation by comp}, we first make a compactness argument to show that we may combine the resolvent estimates of Propositions \ref{desired exp prop} and \ref{perturbed resolv est high energy} to obtain a version of \eqref{RB est} for $\chi R(\lambda) \chi : L^2(\mathbb{R}^n) \to L^2(\mathbb{R}^n)$.

\begin{lemma} \label{piece together resolvent estimates}
For each $\chi \in C_0^\infty(\mathbb{R}^n)$, there exist $C_1, C_2 > 0$, $M > 1$, and $0 < \varepsilon_0 < 1$ such that the meromorphic continuation of the cutoff resolvent $\chi R(\lambda) \chi : L^2(\mathbb{R}^n) \to L^2(\mathbb{R}^n)$ has no poles in the region $\Theta$ of \eqref{combined res free region}, where it obeys 
\begin{equation} \label{cutoff resolv final est}
\|\chi R(\lambda) \chi \|_{L^2 \to L^2} \lesssim  \begin{cases} e^{C_1|\real \lambda|} & \lambda \in \Theta \cap \{ |\real \lambda | > \varepsilon_0\}, \\
 1 + |\lambda|^{n-2}|\log \lambda|  &\lambda \in \Theta \cap \{ 0< |\real \lambda | \le \varepsilon_0\}.
\end{cases}   
\end{equation} 
\end{lemma}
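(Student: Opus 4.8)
## Proof plan for Lemma~\ref{piece together resolvent estimates}

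The plan is to establish \eqref{cutoff resolv final est} by patching together the regions on which $\chi R(\lambda)\chi$ is already controlled. Four ingredients will cover $\Theta$: the low-energy box $Q'$ of Proposition~\ref{desired exp prop} (its set $Q_{\varepsilon_0}$, whose radius I will call $\varepsilon_0'$ to avoid clashing with the $\varepsilon_0$ in the present statement), on which one has the bound $1+|\lambda|^{n-2}|\log\lambda|$; the exponentially thin strip $S:=\{|\real\lambda|>M,\ |\imag\lambda|<e^{-C_2'|\real\lambda|}\}$ of Proposition~\ref{perturbed resolv est high energy}, on which one has $e^{C_1|\real\lambda|}$; the open upper half-plane $\{\imag\lambda>0\}$, where $R(\lambda)$ is the genuine resolvent of the self-adjoint operator $L$, so that $\|\chi R(\lambda)\chi\|_{L^2\to L^2}\lesssim\operatorname{dist}(\lambda^2,[0,\infty))^{-1}$ (using also $c,c^{-1}\in L^\infty$); and an intermediate collar around the real segment $\varepsilon_0\le|\real\lambda|\le M$. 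Since $\Theta$ excludes $\real\lambda=0$ entirely, the even-dimensional branch cut $i\mathbb{R}_-$ never enters, so I may treat the two components $\{\real\lambda>0\}$ and $\{\real\lambda<0\}$ of $\Theta$ separately and ignore it.

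The one genuinely new step is to produce the collar. By Section~\ref{Continuation of the perturbed resolvent}, the meromorphic continuation of $\chi R(\lambda)\chi$ has a discrete pole set disjoint from $\mathbb{R}\setminus\{0\}$, and (being the true resolvent there) it is analytic for $\imag\lambda>0$; hence the compact set $\{\varepsilon_0\le|\real\lambda|\le M,\ \imag\lambda=0\}$ has a pole-free open neighborhood, and by compactness this neighborhood contains a collar $\mathcal{C}:=\{\varepsilon_0\le|\real\lambda|\le M,\ |\imag\lambda|\le h_0\}$ for some $h_0>0$, on which $\|\chi R(\lambda)\chi\|_{L^2\to L^2}$ is bounded by a fixed constant. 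I would then fix the constants in the order: $\varepsilon_0',M,C_2'$ from the two Propositions; $\varepsilon_0:=\varepsilon_0'/2$; $h_0$ from the collar; $C_2:=\max\big(C_2',\,M^{-1}\log(1/\min(h_0,\varepsilon_0'))\big)$ so that $e^{-C_2M}\le\min(h_0,\varepsilon_0')$ and the part of $\Theta$ lying within distance $e^{-C_2|\real\lambda|}$ of the real axis is contained in $Q'\cup\mathcal{C}\cup S$; and finally $C_1$ enlarged as needed below. This choice pins down $\Theta$ and makes $Q'$, $\mathcal{C}$, $S$, and $\{\imag\lambda>0\}$ cover it.

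It then remains to read the two bounds off region by region. On $Q'$ and $\mathcal{C}$ they are immediate (the first from Proposition~\ref{desired exp prop}; the second because a fixed constant is $\lesssim e^{C_1|\real\lambda|}$ on $|\real\lambda|\ge\varepsilon_0$ and $\lesssim 1+|\lambda|^{n-2}|\log\lambda|$ on $0<|\real\lambda|\le\varepsilon_0$). In the upper half-plane I would use $\operatorname{dist}(\lambda^2,[0,\infty))\ge(\imag\lambda)^2$ when $\imag\lambda\ge|\real\lambda|$ and $=2|\real\lambda|\,\imag\lambda$ otherwise: for $\imag\lambda\ge h_0$ this is a fixed constant; for $|\real\lambda|>M$ and $\imag\lambda\ge e^{-C_1|\real\lambda|}$ it gives $(2|\real\lambda|\,\imag\lambda)^{-1}\le e^{C_1|\real\lambda|}$; and the complementary sliver $0<\imag\lambda<e^{-C_1|\real\lambda|}$, together with the below-axis part of $\Theta$ at high energy, lies in $S$ once $C_1$ is replaced by $\max(C_1,C_2)$. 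Analyticity on all of $\Theta$ follows because each covering piece is pole-free, and one last enlargement of $C_1$ yields \eqref{cutoff resolv final est}. I expect the only real delicacy to be this bookkeeping near $|\real\lambda|=M$ and along $\imag\lambda=0$ — verifying that $S$, $\mathcal{C}$, $Q'$, and $\{\imag\lambda>0\}$ genuinely exhaust $\Theta$ and that the constants can be chosen in the stated order without circularity; no further hard analysis is required, since the exponential high-energy bound, the low-energy expansion, and the absence of real poles are already in hand.
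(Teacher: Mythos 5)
Your proposal is correct and follows essentially the same route as the paper: combine Proposition \ref{desired exp prop} near zero, Proposition \ref{perturbed resolv est high energy} in the exponentially thin high-energy strip, the spectral-theorem bound $\|\chi R(\lambda)\chi\|\lesssim \operatorname{dist}(\lambda^2,\mathbb{R}_+)^{-1}$ in the upper half-plane, and a compactness/discreteness-of-poles argument (using the absence of real poles) to get a pole-free collar with a uniform bound over $\varepsilon_0\le|\real\lambda|\le M$. The only cosmetic difference is that you enlarge $C_2$ to fit the collar, whereas the paper shrinks the depth to $\tau'$ and enlarges $M$; the bookkeeping is equivalent.
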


\begin{proof}
Let $\varepsilon_0$ be as in the statement of Proposition \ref{desired exp prop}. Let $C_1$, $C_2$, and $M$ be as in the statement of Proposition \ref{perturbed resolv est high energy eq}. 

Set $\tau \defeq \min\{\varepsilon_0, e^{-C_2 M}\}$. There exist only finitely many poles of $\chi R(\lambda) \chi$ in the compact set $\{ \lambda \in \mathbb{C} : \varepsilon_0 \le |\real \lambda | \le M, -\tau \le \imag \lambda \le 0  \}$. Furthermore, as discussed in Section \ref{Continuation of the perturbed resolvent}, there are no poles of $\chi R(\lambda) \chi$ on the strips $\{\lambda \in \mathbb{R} : \varepsilon_0 \le|\lambda| \le M \}$. Therefore, there exists $0 < \tau' \le \tau$ so that $\{ \lambda \in \mathbb{C} : \varepsilon_0 \le |\real \lambda | \le M, -\tau' \le \imag \lambda \le 0  \}$ contains no poles of $\chi R(\lambda) \chi$. If we redefine $M = -(\log \tau')/C_2$, then $\chi R(\lambda) \chi$ has no poles in \eqref{combined res free region} 


Using  \eqref{peturbed est near zero}, \eqref{perturbed resolv est high energy eq}, and the continuity of $\chi R(\lambda) \chi$ on the rectangles $\{\lambda \in \mathbb{C}: \varepsilon_0 \le |\real \lambda | \le M, \text{ } e^{-C_2M} \le -\imag \lambda \le 1\}$, we get

\begin{equation} \label{piece one}
\|\chi R(\lambda) \chi \|_{L^2 \to L^2} \lesssim  \begin{cases} e^{C_1|\real \lambda|} & |\real \lambda | > M, \text{ } |\imag \lambda | \le e^{-C_2 |\real \lambda|}, \\
e^{C_1|\real \lambda|} & \varepsilon_0 < | \real \lambda | \le M , \text{ } -e^{-C_2M} \le \imag \lambda  \le 1, \\
 1 + |\lambda|^{n-2}|\log \lambda|  &0< |\real \lambda | \le \varepsilon_0, \text{ } \hspace{.05cm} -e^{-C_2M} \le \imag \lambda \le 1.
\end{cases} 
\end{equation}

To finish showing  \eqref{cutoff resolv final est}, we invoke the spectral theorem, which says that for $\imag \lambda > 0$,   
\begin{equation*}
\|\chi  R(\lambda) \chi  \|_{H \to H} \lesssim \frac{1}{\text{dist}(\lambda^2 , \mathbb{R}_+)}.
\end{equation*}
The above bound implies, for instance,
\begin{equation} \label{piece two}
\|\chi  R(\lambda) \chi  \|_{H \to H} \lesssim \begin{cases} (\imag \lambda )^{-1} \le e^{C_2|\real \lambda|}, & |\real \lambda| > M, \text{ }  \imag \lambda > e^{-C_2|\real \lambda|}\\ 1 & |\real \lambda | \le M, \text{ } \imag \lambda > 1. \end{cases}
\end{equation}
Piecing together \eqref{piece one} and \eqref{piece two}, we arrive at \eqref{cutoff resolv final est}.\\
\end{proof}

Recall in section \ref{extend resolv to homg} we extended $R(\lambda)$ for $\lambda^2 \notin \mathbb{R}_+$ to a bounded operator $\dot{H}^1(B(0,R)) \to H^2(\mathbb{R}^n) $ using \eqref{define resolv on homg}. Along with this, we now define bounded $I_1 : \dot{H}^1(B(0,R)) \to H^1(\mathbb{R}^n)$ by 
\begin{equation*}
I_1[\varphi_m] \defeq L^2 \text{-} \lim \varphi_m, \qquad [\varphi_m] \in \dot{H}^1(B(0,R)).
\end{equation*}
The estimate \eqref{Poincare} shows that the above limit function exists and belongs to $H^1(\mathbb{R}^n)$.

Using these operators, we build the bounded matrix operator
  $\mathcal{M}_R(\lambda) :H_R \to H^2(\mathbb{R}^n) \oplus H^1(\mathbb{R}^n) \subseteq  H$,
\begin{equation} \label{defn of matrix op}
\mathcal{M}_R(\lambda) \begin{bmatrix} u_0\\u_1 \end{bmatrix} \defeq \begin{bmatrix} \lambda R(\lambda) & iR(\lambda) \\ -i\lambda^2 R(\lambda) - iI_1 & \lambda R(\lambda)   \end{bmatrix} \begin{bmatrix} u_0 \\ u_1 \end{bmatrix}, \qquad \imag \lambda > 0, 
\end{equation}
where $R(\lambda)$ acts on $u_1$ as the usual resolvent sending $L^2(\mathbb{R}^n) \to H^2(\mathbb{R}^n)$. A brief calculation shows that for all $(u_0,u_1) \in H_R$
\begin{equation*}
\mathcal{M}_R(\lambda) \begin{bmatrix} u_0 \\ u_1 \end{bmatrix} \in D(B) \quad \text{and} \quad (B- \lambda)\mathcal{M}_R(\lambda) \begin{bmatrix} u_0 \\ u_1 \end{bmatrix} = \begin{bmatrix} u_0 \\ u_1 \end{bmatrix}, \qquad \imag \lambda > 0.
\end{equation*}
Therefore we conclude
\begin{equation} \label{resolv matrix id wave op}
R_B(\lambda)(u_0,u_1) = \mathcal{M}_R(\lambda) \begin{bmatrix} u_0 \\ u_1 \end{bmatrix}   \qquad (u_0, u_1) \in H_R, \qquad \imag \lambda > 0.
\end{equation}
Now that we have the estimate \eqref{cutoff resolv final est} and the identity \eqref{resolv matrix id wave op}, we can prove Proposition \ref{continuation by comp}. 
\begin{proof}[Proof of Proposition \ref{continuation by comp}]

Let $\chi \in C_0^\infty(\mathbb{R}^n)$ with $\chi \equiv 1$ on $\overline{B(0,R_1)} \cup \overline{B(0,R_2)}$. For $(u_0, u_1) = ([\varphi_m], u_1) \in H_{R_1}$, set $\varphi = L^2\text{-}\lim \varphi_m$. Note that $\chi \varphi = \varphi$. Also, for any function $u \in H^1(\mathbb{R}^n)$, $\nabla u = \nabla (\chi u)$ as vectors in $(L^2(B(0,R_2)))^n$. Combining these observations with \eqref{defn of matrix op} and \eqref{resolv matrix id wave op}, we get 
\begin{equation} \label{matrix id put cutoffs}
S_{R_2} {R_B}(\lambda) (u_0,u_1) = \begin{bmatrix} \lambda \nabla  \chi R(\lambda)  \chi  \varphi + i\nabla  \chi R(\lambda)  \chi u_1 \\ -i \lambda^2  \chi R(\lambda) \chi \varphi -i \varphi + \lambda  \chi R(\lambda)  \chi u_1 \end{bmatrix}, \qquad  \imag \lambda >0.
\end{equation}
 
By Lemma \ref{piece together resolvent estimates}, the entries in the second component of the right side of  \eqref{matrix id put cutoffs} continue analytically from $\imag \lambda > 0$ to \eqref{combined res free region}. Their $L^2(B(0,R_2))$-norms have estimates of the form \eqref{cutoff resolv final est} for a possibly larger constant $C_1$, to account for the factors of $\lambda$ that appear. 

The terms in the first component continue analytically to \eqref{combined res free region} by the identity 
\begin{equation} \label{gradient identity with cutoffs}
\nabla \chi R(\lambda) \chi = \nabla \chi R_0(\lambda) \chi +  \nabla \chi \tilde{\chi} R_0(\lambda) \tilde{\chi} (1-c^{-2})(\chi + \tilde{\chi} R(\lambda) \tilde{\chi} \chi).
\end{equation}
where $\tilde{\chi} \in C_0^\infty(\mathbb{R}^n)$ is identically one on $\supp \chi \cup \supp (1-c^{-2})$.
The bounds \eqref{nonsemiclassical resolv est away zero} and \eqref{cutoff resolv final est}  imply $\| \nabla \chi R(\lambda) \chi \|_{L^2 \to (L^2)^n}$ also has a bound of the form \eqref{cutoff resolv final est}, where again we  may need to increase $C_1$. Because we have shown each component of $S_{R_2} \mathcal{M}R_B(\lambda) (u_0 ,u_1)$ obeys an estimate of the form \eqref{cutoff resolv final est}, the triangle inequality ensures that \eqref{RB est} holds. \\ 
\end{proof}

We collect one additional fact before proving the local energy decay in the next section. By the spectral theorem, $R(\lambda)^* = R(\overline{\lambda})$, $\lambda^2 \notin \mathbb{R}_+$. Therefore, when $\imag \lambda < 0$, we have the identities
\begin{equation} \label{adjoint formulas}
\begin{aligned} 
\|\chi R(\lambda) \chi \|_{L^2 \to L^2} & = \|(\chi R(\lambda) \chi )^* \|_{L^2 \to L^2}  \\
& = \| \chi R(\overline{\lambda}) \chi \|_{L^2 \to L^2},  \\
\|\partial_x^\alpha \chi R_0(\lambda) \chi \|_{L^2 \to L^2} & = \|(\partial_x^\alpha \chi R_0(\lambda) \chi )^* \|_{L^2 \to L^2}  \\
& = \| \chi R_0(\overline{\lambda}) \chi \partial_x^\alpha \|_{L^2 \to L^2}, \qquad |\alpha| = 1. 
\end{aligned}
\end{equation}

Noting that we can make the same definition \eqref{defn of matrix op} for $\imag \lambda < 0$, and then using \eqref{cutoff resolv final est}, \eqref{adjoint formulas}, and the proof strategy of Proposition \ref{continuation by comp}, we get
\begin{equation} \label{same est below axis}
S_{R_2}R_B(\lambda)(u_0,u_1) \lesssim \begin{cases} e^{C_1 |\real \lambda |} & |\real \lambda | > \varepsilon_0, \text{ } \imag \lambda < 0, \\
1 + |\lambda|^{n-2} |\log \lambda | & 0< |\real \lambda | \le \varepsilon_0, \text{ } \imag \lambda < 0.
\end{cases}
\end{equation}

\section{Proof of local energy decay} \label{proof of local energy decay}
We now give the proof of Theorem \ref{decay}, our local energy decay.  The proof proceeds in the spirit of  \cite[Proposition 1.4]{povo99}. The idea is to rewrite the wave propagator using the spectral theorem and Stone's formula, and then make an appropriate contour deformation which is made possible by Proposition \ref{continuation by comp}. 
\begin{proof}[Proof of Theorem \ref{decay}]
Throughout the proof, we use $a \lesssim b$ to denote $a  \le Cb$, where $C> 0$ is a constant that does not depend on $t$ or the initial data $(u_0,u_1)$. If a norm appears without a subscript, it denotes the norm on $(L_R^2)^{n+1}$. 

It it enough to show that  
\begin{equation*}
\| S_{R_2} e^{itB} (u_0,u_1) \| \lesssim \frac{1}{(\log(2 + t))^k} \| (u_0,u_1) \|_{D(B^k)}, \qquad t \ge 0.
\end{equation*}
Moreover, we can replace $\|(u_0,u_1) \|_{D(B^k)}$ by $\| (B - i)^{k}(u_0,u_1) \|_H$ on the right side because the spectral theorem shows that the operators $B^k$ and $(B- i)^{k}$ have the same domain and that the norms $\| (u_0,u_1) \|_{D(B^k)}$ and  $\| (B- i)^{k}(u_0,u_1) \|_H$ are equivalent.

Let $E$ denote the spectral measure associated to $B$, and let $ X = X(t)$ be a parameter which depends on $t$. In the last step of the proof we give the explicit dependence of $X$ on $t$. 

To keep our notation concise, set $F(\lambda) = e^{-it\lambda}(\lambda - i)^{-k}$. The wave propagator may be rewritten as
\[ \begin{split}
e^{-itB} (u_0, u_1) &=  e^{-itB} (B- i)^{-k}(B- i)^{k} (u_0, u_1) \\ 
& = \int_{-\infty}^\infty F(\lambda) dE(\lambda) (B- i)^{-k} (u_0, u_1) \\
& = \left( \int_{-X}^X F(\lambda) dE(\lambda)  +  \int_{|\lambda| \ge X} F(\lambda) dE(\lambda) \right) (B- i)^{k}(u_0,u_1) \\
& = (I_{|\lambda| < X} + I_{|\lambda| \ge X})  (B- i)^{k} (u_0, u_1)
\end{split} \]
We apply $S_{R_2}$ to each of the two integrals and estimate them by separate methods.

To handle $S_{R_2} I_{|\lambda| \ge X} (B- i)^{k}(u_0,u_1)$, let $\mathbf{1}_{\mathbb{R}\setminus [-X,X]}$ denote the indicator function of the set $\mathbb{R}\setminus [-X,X]$. Then, using $\| S_R \|_{H \to (L_R^2)^{n+1} } =1$ and properties of the spectral measure,
\begin{equation} \label{bigger than X}
\begin{aligned}
\|S_{R_2} I_{|\lambda| \ge X}  \|_{H \to (L_R^2)^{n+1}} &\le \| I_{|\lambda| \ge X} \|_{H \to H}  \\
&\le \sup_{|\lambda | \ge X} \left|F(\lambda) \mathbf{1}_{\mathbb{R}\setminus [-X,X]}(\lambda) \right| \\
&\lesssim  X^{-k}.
\end{aligned} 
\end{equation}

To estimate $S_{R_2} I_{|\lambda| < X} (B -i)^k (u_0, u_1)$, we use Stone's formula, which says that, with respect to strong convergence, the spectral measure may be expressed as
\begin{equation*}
dE(\lambda) = \lim_{\varepsilon \to 0^+} (2\pi i )^{-1}(R_B(\lambda + i \varepsilon) - R_B(\lambda - i \varepsilon))d \lambda.
\end{equation*}
For each $\varepsilon > 0$, we can move $(B- i )^k(u_0,u_1)$ inside the integral. In addition, the boundedness of $S_{R_2}$ allows us to commute it through this strong limit. We get
\[ \begin{split}
2 \pi i S_{R_2} I_{|\lambda| \ge X}  \langle B \rangle^{k} (u_0, u_1) & = \lim_{\varepsilon \to 0^+} \int_{-X}^{X} F(\lambda)S_{R_2}(R_B(\lambda + i \varepsilon) - R_B(\lambda - i \varepsilon)) (B-i)^{k} (u_0, u_1) d\lambda  \\
&= \lim_{\varepsilon \to 0^+} \biggl( \int_{-X}^X F(\lambda) S_{R_2} R_B(\lambda + i \varepsilon) (B-i)^{k} (u_0, u_1) d\lambda \\
&+  \int_{-X}^X F(\lambda) S_{R_2} R_B(\lambda - i \varepsilon) (B-i)^{k} (u_0, u_1) d\lambda \biggr) \\
& = \lim_{\varepsilon \to 0^+} \biggl( \int_{-X+ i \varepsilon}^{X+ i \varepsilon} F(\lambda - i \varepsilon) S_{R_2} R_B(\lambda) (B-i)^{k} (u_0, u_1) d\lambda \\
&+  \int_{-X - i \varepsilon}^{X - i \varepsilon} F(\lambda + i \varepsilon) S_{R_2} R_B(\lambda) (B-i)^{k} (u_0, u_1) d\lambda \biggr) \\
& = \lim_{\varepsilon \to 0^+} \left( I^+(\varepsilon) + I^-(\varepsilon) \right). 
\end{split} \]
The endpoints for the final two integrals indicate that we integrate over the line segments $\{\lambda \pm i \varepsilon : \lambda \in [-X,X]\}$.

As discussed in section \ref{define resolv on homg}, the operator $B$ sends $H_{R_1}$ into $H_{R'}$ for any $R' > R_1$, hence $(B - i)^{k}(u_0,u_1) \in H_{R'}$. Therefore, Proposition \ref{continuation by comp} applies to $S_{R_2} R_B(\lambda)(B-i)^k(u_0, u_1)$. Setting $C_3 \defeq \max \{2C_1, C_2\}$, we perform a contour deformation for $I^+(\varepsilon)$ which has seven segments, $I^+_k = I^+_k(\varepsilon)$, $1 \le k \le 7$, see Figure \ref{plus}. 

\begin{figure}
\labellist
\small 
\pinlabel $M$  at 105 17
\pinlabel $-M$  at -1 17
\pinlabel $-\varepsilon_0$ at 65 18
\pinlabel $-\varepsilon_0$ at 37 18
\pinlabel $-\varepsilon$ at 46 17
\pinlabel $\varepsilon$ at 50 25
\pinlabel $\varepsilon$ at 59 17
\pinlabel $-e^{-C_3X}$ at 46 1
\pinlabel $I^+_1$ at 24 7
\pinlabel $I^+_2$ at 80 7
\pinlabel $I^+_3$ at 0 10
\pinlabel $I^+_4$ at 105 10
\pinlabel $I^+_5$ at 47 10
\pinlabel $I^+_6$ at 58 10
\pinlabel $I^+_7$ at 58 26
\endlabellist
\includegraphics[width=12cm]{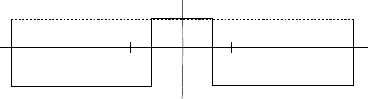}
 \caption{The contour deformation for $I^+(\varepsilon)$.}
 \label{plus}
\end{figure}

\begin{figure}[h]
\labellist
\small 
\pinlabel $M$  at 101 25
\pinlabel $-M$  at 2 25
\pinlabel $-\varepsilon$ at 50 17
\pinlabel $-e^{-C_3X}$ at 46 1
\pinlabel $I^-_1$ at 15 7
\pinlabel $I^-_2$ at 0 10
\pinlabel $I^-_3$ at 104 10
\endlabellist
\includegraphics[width=12cm]{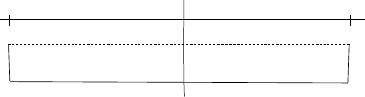}
 \caption{The contour deformation for $I^-(\varepsilon)$.}
 \label{minus}
\end{figure}

We use \eqref{RB est} to estimate the integral over each segment, and omit the factor $\|(B-i)^k(u_0,u_1)\|_H$ that should appear on the right side of each inequality:
\begin{equation} \label{I plus}
\begin{aligned}
\| I_1^+(\varepsilon) \|,\|  I_2^+(\varepsilon) \| &\lesssim Xe^{-te^{-C_3X} + C_1 X},  \\ 
\| I^+_3(\varepsilon) \|, \|I^+_4(\varepsilon) \| &\lesssim (\varepsilon + e^{-C_3 X}) X^{-k} e^{C_1 X}\\
\| I^+_5(\varepsilon)\|,  \|I^+_6(\varepsilon)\|&\lesssim \int_{-e^{-C_3 X}}^{\varepsilon} e^{\varepsilon t} | \log |r||dr ,\\
\| I^+_7(\varepsilon) \| &\lesssim \int_{-\varepsilon}^\varepsilon e^{\varepsilon t} |\log |r| | dr , \\
\end{aligned}
\end{equation}

To handle $I^{-}(\varepsilon)$, we deform it into three segments, $I_k^- = I_k^-(\varepsilon)$, $1 \le k \le 3$. Using \eqref{same est below axis}, and again omitting the factor $\| (B-i)^k (u_0,u) \|_H$, we have
\begin{equation} \label{I minus}
\begin{aligned}
\| I^-_1(\varepsilon) \| &\lesssim X e^{-te^{C_3X} + C_1X}, \\ 
\| I^-_2(\varepsilon) \|,\| I^-_3(\varepsilon) \|  &\lesssim X^{-k} e^{(C_1-C_3) X}, \\
\end{aligned}
\end{equation}
Taking $\varepsilon \to 0^+$ and  using the bounds from \eqref{bigger than X}, \eqref{I plus}, and \eqref{I minus}, we get
\begin{equation} \label{penultimate bound}
\|S_{R_2}e^{itB} (u_0,u_1)\| \lesssim \left( Xe^{-te^{-C_3X} + C_1 X}+ X^{-k} \right)\| (B- i)^k(u_0,u_1) \|_{D(B^k)}.
\end{equation}

To finish the proof, set $X(t) = (2C_3)^{-1}\log (2 + t)$. We have,
\begin{equation} \label{log bound above}
\begin{aligned}
\log(2 + t) &\lesssim t(t + 2)^{-1/2} - C_1(2C_3)^{-1}\log(2+t)  \\
& =  te^{-C_3X} - C_1 X, \qquad t \to \infty.
\end{aligned}
\end{equation}
Furthermore, for any $C > 0$,
\begin{equation} \label{lhospital}
xe^{-Cx} \lesssim x^{-k}, \qquad x >0. 
\end{equation}
Plugging the expression for $X(t)$ into \eqref{penultimate bound} and estimating using \eqref{log bound above} and \eqref{lhospital} completes the proof. \\
\end{proof}
\appendix
\section{Proofs of self-adjointness}
\label{proofs of self-adjointness}
\begin{proposition} \label{L self adj}
The operator $L= -c^2(x) \Delta : L^2_c(\mathbb{R}^n) \to L_c^2(\mathbb{R}^n)$ with domain $D(L) = H^2(\mathbb{R}^n)$  is self-adjoint. 
\end{proposition}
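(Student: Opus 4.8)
The plan is to reduce self-adjointness of $L = -c^2(x)\Delta$ on $L^2_c(\mathbb{R}^n)$ to the known self-adjointness of the free Laplacian $-\Delta$ on $L^2(\mathbb{R}^n)$ with domain $H^2(\mathbb{R}^n)$, by exhibiting an explicit unitary (or at least bounded, boundedly invertible) conjugation between the two Hilbert spaces that intertwines the operators up to a symmetric zeroth-order factor. The natural candidate is multiplication: let $m(x) \defeq c^{-1}(x)$, and consider the map $M_m : L^2(\mathbb{R}^n) \to L^2_c(\mathbb{R}^n)$, $f \mapsto c\, f$. Since $c, c^{-1} \in L^\infty$, this map is a bounded isomorphism with bounded inverse; in fact a direct computation shows $\|cf\|_{L^2_c}^2 = \int c^{-2}|cf|^2\,dx = \|f\|_{L^2}^2$, so $M_c$ is unitary from $L^2(\mathbb{R}^n)$ onto $L^2_c(\mathbb{R}^n)$. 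The subtlety is that conjugating $-c^2\Delta$ by $M_c$ does not land back on $-\Delta$ because $c$ is only Lipschitz, so $\Delta(cf)$ involves $\nabla c$ and (distributionally) $\Delta c$, which need not be a function. Hence a naive conjugation argument fails, and this is the main obstacle.

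The cleaner route, which I would actually carry out, is via quadratic forms. Consider the symmetric bilinear form $Q(u,v) \defeq \int_{\mathbb{R}^n} \nabla u \cdot \overline{\nabla v}\, dx$ with form domain $H^1(\mathbb{R}^n)$, regarded now as a form on the Hilbert space $L^2_c(\mathbb{R}^n)$. One checks that $Q$ is densely defined, nonnegative, and closed on $L^2_c(\mathbb{R}^n)$: closedness holds because the form norm $Q(u,u) + \|u\|_{L^2_c}^2$ is equivalent to the standard $H^1(\mathbb{R}^n)$ norm, using $c^{-1} \in L^\infty$ to compare $\|u\|_{L^2_c}$ with $\|u\|_{L^2}$. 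By the standard representation theorem for closed nonnegative forms, $Q$ is the form of a unique nonnegative self-adjoint operator $\tilde L$ on $L^2_c(\mathbb{R}^n)$. It then remains to identify $\tilde L$ with $-c^2\Delta$ and to show its operator domain is exactly $H^2(\mathbb{R}^n)$. For the identification: if $u \in D(\tilde L)$ then $Q(u,v) = \langle \tilde L u, v\rangle_{L^2_c} = \int (\tilde L u) \overline{v}\, c^{-2}\,dx$ for all $v \in H^1$, and integrating by parts on the left gives $\langle -\Delta u, v\rangle_{L^2} = \langle c^{-2}\tilde L u, v\rangle_{L^2}$, so $-\Delta u = c^{-2}\tilde L u$ in the distributional sense, i.e. $\tilde L u = -c^2 \Delta u$; conversely any $u \in H^2$ with $-c^2\Delta u \in L^2_c$ (automatic since $\Delta u \in L^2$ and $c \in L^\infty$) satisfies the form identity.

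The final and most delicate step is the domain identification $D(\tilde L) = H^2(\mathbb{R}^n)$. The inclusion $H^2(\mathbb{R}^n) \subseteq D(\tilde L)$ follows from the form computation just described together with $\Delta u \in L^2 \subseteq L^2_c$. For the reverse inclusion, suppose $u \in D(\tilde L)$, so $u \in H^1$ and $-\Delta u = c^{-2}\tilde L u =: g \in L^2(\mathbb{R}^n)$ (here again using $c^{-2} \in L^\infty$ to see $g \in L^2$). Then $u \in H^1$ solves $-\Delta u = g$ with $g \in L^2$, and elliptic regularity for the constant-coefficient Laplacian on $\mathbb{R}^n$ — equivalently, $u = R_0(i)(g + u) \in H^2$ since $(-\Delta + 1)u = g + u \in L^2$ — gives $u \in H^2(\mathbb{R}^n)$. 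Thus $D(\tilde L) = H^2(\mathbb{R}^n)$ and $\tilde L = L$, so $L$ is self-adjoint and nonnegative. I expect the form-closedness verification and the elliptic-regularity bootstrap to be the only places requiring care; both are routine once the norm equivalences coming from $c, c^{-1} \in L^\infty$ are written down, and crucially neither uses any derivative of $c$, so the Lipschitz hypothesis is not even needed for this proposition (it is only the positivity and two-sided boundedness of $c$ that matter here).
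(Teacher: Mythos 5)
Your argument is correct, but it takes a genuinely different route from the paper. The paper proves self-adjointness directly: symmetry on $H^2(\mathbb{R}^n)$ by integration by parts (the weight $c^{-2}$ cancels the factor $c^2$ in $L$), and then the inclusion $D(L^*) \subseteq H^2(\mathbb{R}^n)$ by a hands-on Fourier duality estimate — testing $(1+|\xi|^2)\mathcal{F}u$ against $\varphi \in C_0^\infty$, inserting $v = \mathcal{F}^{-1}\varphi$ into the adjoint identity $\langle u, Lv\rangle_{L^2_c} = \langle \tilde u, v\rangle_{L^2_c}$, and bounding by $C\|\varphi\|_{L^2}$. You instead realize $L$ as the operator associated, via the Kato representation theorem, to the closed nonnegative Dirichlet form $Q(u,v)=\int \nabla u\cdot\overline{\nabla v}\,dx$ with form domain $H^1(\mathbb{R}^n)$ on the weighted space $L^2_c(\mathbb{R}^n)$, and then identify $D(\tilde L)=H^2(\mathbb{R}^n)$ by the distributional identity $-\Delta u = c^{-2}\tilde L u \in L^2$ together with flat elliptic regularity $(-\Delta+1)^{-1}:L^2 \to H^2$. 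Both proofs are sound and, as you observe, neither uses $\nabla c \in L^\infty$ — only $c, c^{-1} \in L^\infty$. What your route buys is nonnegativity of $L$ for free (which the paper asserts in Section 2 but does not reprove in the appendix) and a clean conceptual separation between symmetry and domain identification; what the paper's route buys is self-containedness, avoiding the form representation theorem entirely in favor of an elementary Fourier-transform estimate. One small point of care in your write-up: in the identification step you should test against $v \in C_0^\infty$ (this is exactly the definition of the distributional Laplacian for $u \in H^1$) before extending to $v \in H^1$; and the opening paragraph about the unitary $M_c$ is a dead end you rightly abandon, so it could simply be omitted.
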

\begin{proof}
We need to show that $D(L^*) = H^2(\mathbb{R}^n)$, and that 
\begin{equation} \label{L across prod}
\langle Lu,v \rangle_{L^2_c} =\langle u,Lv \rangle_{L^2_c}, \qquad u,v \in H^2(\mathbb{R}^n).
\end{equation}
First, we show that $H^2(\mathbb{R}^n) \subseteq D(L^*)$, and that \eqref{L across prod} holds. Let $u,v \in H^2(\mathbb{R}^n)$. We use the fact that integration by parts holds for functions $u, v \in H^2(\mathbb{R}^n)$,
\[ \begin{split}
\langle u,Lv \rangle_{L^2_c} &\defeq \int u (\overline{-c^2 \Delta v}) c^{-2} \\ 
 &= -\int u \Delta \overline{v} \\
 &= -\int \Delta u \overline{v} \\
 &= \int -c^2\Delta u \overline{v}c^{-2} \\
 &= \langle Lu,v \rangle_{L^2_c}. \\
\end{split} \]

To see that $D(L^*) \subseteq H^2(\mathbb{R}^n)$, suppose $u \in D(L^*)$. By definition, there exists a unique $\tilde{u} \in L^2_c(\mathbb{R}^n)$ so that for all $v \in H^2(\mathbb{R}^n)$, 
\begin{equation} \label{adj domain prop}
\langle u,Lv \rangle_{L^2_c} = \langle \tilde{u},v \rangle_{L^2_c}.
\end{equation}

Let $\mathcal{F}$ denote the Fourier transform. Using the Fourier transform characterization of $u \in H^2(\mathbb{R}^n)$, it suffices to show there exists $C>0$ so that for all $\varphi \in C^\infty_0(\mathbb{R}^n)$
\begin{equation} \label{suff for H two}
\left| \langle (1 + |\cdot|^2) \mathcal{F}u, \varphi \rangle_{L^2} \right| \le C \| \varphi \|_{L^2},  
\end{equation}
By properties of $\mathcal{F}$,
\[ \begin{split}
\left| \langle (1 + |\cdot|^2) \mathcal{F}u, \varphi \rangle_{L^2} \right| &= \left|\langle u,\mathcal{F}^{-1}(1 + | \cdot |^2)  \varphi \rangle_{L^2} \right| \\ 
&= \left|\langle u,\mathcal{F}^{-1}\varphi \rangle_{L^2} + \langle u, L \mathcal{F}^{-1}\varphi \rangle_{L^2_c} \right|\\
&=  \left|\langle u,\mathcal{F}^{-1}\varphi \rangle_{L^2} + \langle \tilde{u}, \mathcal{F}^{-1}\varphi \rangle_{L^2_c} \right|\\
 &\le \| u \|_{L^2} \|\mathcal{F}^{-1}\varphi \|_{L^2}  +\|c^2\|_{L^\infty} \| \tilde{u} \|_{L^2} \|\mathcal{F}^{-1}\varphi \|_{L^2}. \\
& \le C\|\varphi\|_{L^2},
\end{split} \]
where $C >0$ depends on $u$, $\tilde{u}$, $c$ and $\|\mathcal{F}^{-1}\|_{L^2 \to L^2}$. This establishes \eqref{suff for H two} and completes the proof.
\\ 
\end{proof}
\begin{proposition}
The operator
\begin{equation*}
B \defeq \begin{bmatrix} 0 & iI \\ -iL  & 0 \end{bmatrix}: H \to H,
\end{equation*}
 with respect domain 
\begin{equation*}
D(B) \defeq \{(u_0,u_1) \in H : \Delta u_0 \in L^2(\mathbb{R}^n),  u_1 \in H^1(\mathbb{R}^n)\}.
\end{equation*} 
is self-adjoint.
\end{proposition}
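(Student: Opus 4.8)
The plan is to show that $B$ is a closed, symmetric operator whose deficiency spaces $\ker(B^{*}-i)$ and $\ker(B^{*}+i)$ are both trivial; by von Neumann's criterion, once closedness is in hand this is equivalent to self-adjointness (and to $\operatorname{Ran}(B-i)=\operatorname{Ran}(B+i)=H$). I argue through the deficiency spaces rather than trying to invert $B\pm i$ directly: the reason is that the first summand of $H$ is the homogeneous space $\dot H^{1}(\mathbb R^{n})$ rather than $L^{2}(\mathbb R^{n})$, so that solving $(B-i)(u_{0},u_{1})=(f_{0},f_{1})$ reduces to $(L+1)u_{0}=i(f_{0}+f_{1})$, whose right side need not belong to $L^{2}(\mathbb R^{n})$ when $f_{0}\in\dot H^{1}(\mathbb R^{n})$, and then the naive resolvent formula does not obviously land in $D(B)$. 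Throughout I use that $c,c^{-1}\in L^{\infty}$, so that $L^{2}_{c}(\mathbb R^{n})$ and $L^{2}(\mathbb R^{n})$ coincide as sets with equivalent norms and multiplication by $c^{\pm 2}$ is a bounded bijection of $L^{2}(\mathbb R^{n})$.

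Symmetry is a short computation: for $(u_{0},u_{1}),(v_{0},v_{1})\in D(B)$, expanding $B(u_{0},u_{1})=(iu_{1},-iLu_{0})$ and using $\langle Lu_{0},v_{1}\rangle_{L^{2}_{c}}=-\int\Delta u_{0}\,\overline{v_{1}}\,dx=\int\nabla u_{0}\cdot\overline{\nabla v_{1}}\,dx$ — the integration by parts being valid since $\Delta u_{0}\in L^{2}(\mathbb R^{n})$ and $v_{1}\in H^{1}(\mathbb R^{n})$, after cutting off at infinity — one checks that $\langle B(u_{0},u_{1}),(v_{0},v_{1})\rangle_{H}$ and $\langle (u_{0},u_{1}),B(v_{0},v_{1})\rangle_{H}$ both equal $i\langle\nabla u_{1},\nabla v_{0}\rangle_{(L^{2})^{n}}-i\langle\nabla u_{0},\nabla v_{1}\rangle_{(L^{2})^{n}}$. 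For closedness, suppose $(u_{0}^{(j)},u_{1}^{(j)})\to(u_{0},u_{1})$ and $(iu_{1}^{(j)},-iLu_{0}^{(j)})\to(g_{0},g_{1})$ in $H$. From the first components, $\nabla u_{1}^{(j)}$ is Cauchy in $L^{2}(\mathbb R^{n})$ and $u_{1}^{(j)}\to u_{1}$ in $L^{2}(\mathbb R^{n})$, so $u_{1}^{(j)}\to u_{1}$ in $H^{1}(\mathbb R^{n})$ and $g_{0}=iu_{1}$. From the second components, $\Delta u_{0}^{(j)}$ converges in $L^{2}(\mathbb R^{n})$ to some $h$, while $\nabla u_{0}^{(j)}\to\nabla u_{0}$ in $L^{2}(\mathbb R^{n})$ forces $\Delta u_{0}^{(j)}\to\Delta u_{0}$ in $\mathcal D'(\mathbb R^{n})$; comparing the two limits, $\Delta u_{0}=h\in L^{2}(\mathbb R^{n})$, hence $(u_{0},u_{1})\in D(B)$ and $B(u_{0},u_{1})=(g_{0},g_{1})$.

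The heart of the matter is $\ker(B^{*}\mp i)=\{0\}$; I carry out the case $\ker(B^{*}-i)$, the other being identical up to signs. Let $(v_{0},v_{1})\in D(B^{*})$ with $B^{*}(v_{0},v_{1})=i(v_{0},v_{1})$, so that $\langle B(u_{0},u_{1}),(v_{0},v_{1})\rangle_{H}=-i\langle(u_{0},u_{1}),(v_{0},v_{1})\rangle_{H}$ for all $(u_{0},u_{1})\in D(B)$. Specializing to $u_{0}=0$ yields $\langle u_{1},v_{0}\rangle_{\dot H^{1}}=-\langle u_{1},v_{1}\rangle_{L^{2}_{c}}$ for every $u_{1}\in H^{1}(\mathbb R^{n})$; testing against $u_{1}\in C_{0}^{\infty}(\mathbb R^{n})$ and integrating by parts gives the distributional identity $\Delta v_{0}=v_{1}c^{-2}\in L^{2}(\mathbb R^{n})$, so $v_{0}$ is an admissible first component for a member of $D(B)$. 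Specializing instead to $u_{1}=0$ yields $\langle Lu_{0},v_{1}\rangle_{L^{2}_{c}}=\langle u_{0},v_{0}\rangle_{\dot H^{1}}$ for every admissible $u_{0}$; taking $u_{0}=v_{0}$ and using $Lv_{0}=-c^{2}\Delta v_{0}=-v_{1}$, the left side becomes $-\|v_{1}\|_{L^{2}_{c}}^{2}$ while the right side is $\|\nabla v_{0}\|_{(L^{2})^{n}}^{2}$. Therefore $\|\nabla v_{0}\|_{(L^{2})^{n}}^{2}+\|v_{1}\|_{L^{2}_{c}}^{2}=0$, forcing $v_{1}=0$ and $v_{0}=0$ in $\dot H^{1}(\mathbb R^{n})$; the same argument, with $\Delta v_{0}=-v_{1}c^{-2}$, gives $\ker(B^{*}+i)=\{0\}$. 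Combining the three facts, $B$ is self-adjoint.

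I expect the main difficulty to be the bookkeeping imposed by $\dot H^{1}(\mathbb R^{n})$ being homogeneous rather than $L^{2}$-based: one must check that every integration by parts in the symmetry and deficiency arguments is legitimate for functions that need not decay (this is why I only test against $C_{0}^{\infty}(\mathbb R^{n})$, or cut off at infinity), and in the closedness step one must extract enough distributional control from $\dot H^{1}$-convergence to pin down $\Delta u_{0}$. By contrast, the algebra of the deficiency computation is pleasantly short: once $\Delta v_{0}=\pm v_{1}c^{-2}$ has been identified, the choice $u_{0}=v_{0}$ collapses everything to a one-line positivity inequality.
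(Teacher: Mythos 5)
Your proof is correct, but it takes a genuinely different route from the paper's. The paper proves symmetry and then shows directly that $D(B^*)\subseteq D(B)$: from the defining identity of the adjoint it extracts duality bounds on the Fourier side, proving $(1+|\xi|^2)^{1/2}\mathcal{F}v\in L^2$ and $\sum_j\xi_j\mathcal{F}(\partial_{x_j}u)\in L^2$, and this requires an auxiliary density lemma (density of $\{|\xi|\mathcal{F}\varphi:\varphi\in\mathcal{S}\}$ in $L^2$) to conclude that the second component lies in $H^1$. You instead verify symmetry and closedness and show both deficiency spaces are trivial, invoking von Neumann's criterion. What your route buys is that for $(v_0,v_1)\in\ker(B^*\mp i)$ you never have to prove $v_1\in H^1$ (the harder half of the paper's domain inclusion): testing against $(0,u_1)$ only identifies $\Delta v_0=\pm v_1c^{-2}\in L^2$, which is exactly enough to make $(v_0,0)$ an admissible element of $D(B)$, and the choice $u_0=v_0$ collapses everything to the positivity identity $\|\nabla v_0\|_{(L^2)^n}^2+\|v_1\|_{L^2_c}^2=0$. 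The price is the closedness argument, which the paper gets for free (a self-adjoint operator is automatically closed); your closedness proof is sound, since $\dot H^1$-convergence of $u_0^{(j)}$ identifies the distributional limit of $\Delta u_0^{(j)}$ as $\Delta u_0$, and the $L^2$-convergence of $c^2\Delta u_0^{(j)}$ then upgrades it. Two small points you should state explicitly: $D(B)$ is dense in $H$ (immediate, since $C_0^\infty\oplus C_0^\infty\subseteq D(B)$ and $\dot H^1(\mathbb{R}^n)$ is by definition the completion of $C_0^\infty(\mathbb{R}^n)$), which is needed both for $B^*$ to exist and for von Neumann's theorem; and the integration by parts in the symmetry step is justified exactly as you indicate, by density of $C_0^\infty$ in $H^1$ together with the interpretation of $\Delta u_0$ as the distributional divergence of $\nabla u_0$, which is the sense in which the domain condition $\Delta u_0\in L^2$ must be read for an equivalence class $u_0\in\dot H^1(\mathbb{R}^n)$.
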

\begin{proof}
Suppose $(u_1, v_1),(u_2,v_2) \in D(B)$. We compute
\[\begin{split}
\langle (u_1, v_1), B(u_2,v_2) \rangle_H &= \langle \nabla u_1, i\nabla v_2 \rangle_{L^2} + \langle v_1, -i L u_2 \rangle_{L_c^2} \\ 
&= - \langle \Delta u_1, iv_2  \rangle_{L^2} + \langle \nabla v_1, i \nabla u_2  \rangle_{L^2} \\
&= \langle -iL u_1, v_2  \rangle_{L_c^2} + \langle i \nabla v_1,  \nabla u_2  \rangle_{L^2} \\
&= \langle (i v_1, -iLu_1) , (u_2, v_2) \rangle_H \\
& = \langle B(u_1, v_1) , (u_2, v_2) \rangle_H. \\
\end{split}\]
It remains to show that $D(B^*) \subseteq D(B)$. To this end, suppose $(u,v) \in D(B^*)$. Then there exists unique $(\tilde{u}, \tilde{v}) \in H$ such that for all $(u_1, v_1) \in D(B)$,
\begin{equation} \label{definition domain of B}
\langle (u,v) , B(u_1, v_1) \rangle_H = \langle (\tilde{u}, \tilde{v}), (u_1, v_1) \rangle_H.
\end{equation}
This implies 
\begin{align}
\langle v, -iLu_1 \rangle_{L^2_c} &= \langle \tilde{u}, u_1 \rangle_{\dot{H}^1}, \qquad u_1 \in \dot{H^1}, \text{ } \Delta u_1 \in L^2, \label{v property} \\
\langle u, iv_1 \rangle_{\dot{H}^1} &= \langle \tilde{v}, v_1 \rangle_{L^2_c}, \qquad v_1 \in H^1. \label{u property}
\end{align}
To show $(u,v) \in D(B)$, it suffices to show 
\begin{align}
(1 + |\xi|^2)^{\frac{1}{2}} \mathcal{F}v &\in L^2(\mathbb{R}^n), \label{belongs to H1} \\
 \qquad \sum_{j = 1}^n \xi_j \mathcal{F}(\partial_{x_j} u) &\in L^2(\mathbb{R}^n) \label{Laplace in L2}. 
\end{align}
Observe that \eqref{Laplace in L2} ensures that the distributional Laplacian of $u$ belongs to $L^2(\mathbb{R}^n)$, according to the calculation
\[ \begin{split}
\int u(x) \Delta \overline{\varphi(x)}dx &= - \int \sum_{j=1}^n \partial_{x_j} u(x) \partial_{x_j} \overline{ \varphi(x)}dx \\ 
&= \int \sum_{j = 1}^n i \xi  \mathcal{F}(\partial_{x_j}u)(\xi)\overline{\mathcal{F}\varphi(\xi)} d\xi , \qquad  \varphi \in C_0^\infty(\mathbb{R}^n).    
\end{split} \]

To show \eqref{belongs to H1}, we first demonstrate that the subspace $\{|\xi| \mathcal{F}\varphi : \varphi \in \mathcal{S}(\mathbb{R}^n)\}$ is dense in $L^2(\mathbb{R}^n)$. Suppose that $u \in L^2(\mathbb{R}^n)$ has the property 
\begin{equation*}
\int |\xi | u(\xi) \overline{\mathcal{F}\varphi(\xi)} d\xi = 0, \qquad \varphi \in \mathcal{S}(\mathbb{R}^n). 
\end{equation*}
This implies $|\cdot| u \in L^1_{loc}(\mathbb{R}^n)$ has the property
\begin{equation*}
\int |\xi| u(\xi) \eta(\xi) d\xi= 0, \qquad \eta \in C^\infty_0(\mathbb{R}^n). 
\end{equation*}
So, almost everywhere, we must have $|\xi |u(\xi) =0$, which in turn requires $u = 0$ in $L^2(\mathbb{R}^n)$. This confirms that $\{|\xi| \mathcal{F}\varphi : \varphi \in \mathcal{S}(\mathbb{R}^n)\}$ is dense in $L^2(\mathbb{R}^n)$. 

Now, for all $\varphi \in \mathcal{S}(\mathbb{R}^n)$, \eqref{v property} says
\[ \begin{split}
\left|\langle |\cdot| \mathcal{F}v, |\cdot| \mathcal{F}\varphi \rangle_{L^2}\right| &= \left | \int v(x)  \overline{-\Delta \varphi(x)} dx \right|  \\
&= \left| \langle v, -iL (i\varphi)  \rangle_{L^2_c}  \right| \\
&= \left| \langle \tilde{u}, i\varphi  \rangle_{\dot{H}^1} \right| \\ 
&\le \| \tilde{u} \|_{\dot{H}^1} \| \varphi \|_{\dot{H}^1} \\
&= \| \tilde{u} \|_{\dot{H}^1} \| |\cdot| \mathcal{F}\varphi \|_{L^2}.
\end{split} \]
This shows $| \cdot | \mathcal{F}v \in L^2(\mathbb{R}^n)$ and so we have proved \eqref{belongs to H1}

To finish, we show \eqref{Laplace in L2}. For $\varphi \in \mathcal{S}(\mathbb{R}^n)$, we use \eqref{u property} to calculate
\[ \begin{split}
\left| \left \langle \sum^n_{j =1} \xi_j \mathcal{F}(\partial_{x_j} u), i\mathcal{F}\varphi \right\rangle_{L^2} \right| &=\left| \langle \nabla u, i\nabla\varphi \rangle_{L^2} \right| \\
&= \left| \langle u, i\varphi \rangle_{\dot{H}^1} \right| \\
&= \left| \langle \tilde{v}, \varphi \rangle_{L^2_c} \right| \\
&\le \|c^{-2} \|_{L^\infty} \|\tilde{v} \|_{L^2} \| \varphi \|_{L^2}.
\end{split} \]
This establishes \eqref{Laplace in L2} and completes the proof. \\
\end{proof}


\begin{thebibliography}{0}

\bibitem[Be03]{be03}
M. Bellassoued. Carleman estimates and distribution of resonances for the transparent obstacle and application to the stabilization. \textit{Asymptot. Anal.}, (3--4)35 (2003), 257--279

\bibitem[Bo11]{bo11}
 J. Bouclet. Low frequency estimates and local energy decay for asymptotically Euclidean Laplacians. \textit{Comm. Partial Differential Equations}, (7)36 (2011), 1239--1286

\bibitem[Bu98]{bu98}
 N. Burq. D\'ecroissance de l'\'energie locale de l'\'equation des ondes pour le probl\`eme ext\'erieur et absence de r\'esonance au voisinage du r\'eel. \textit{Acta Math.}, (1)180 (1998), 1--29

\bibitem[Bu02]{bu02} N. Burq. Lower bounds for shape resonances widths of longe range Schr\"odinger operators. \textit{Amer. J. Math.}, (4)124 (2002), 677--735


\bibitem[Ch09]{ch09} H. Christianson. Applications of cutoff resolvent estimates to the wave equation.
\textit{Math. Res. Lett.}, (4)16 (2009), 577--590 

\bibitem[Da14]{da}
K. Datchev. Quantitative limiting absorption principle in the semiclassical limit. \textit{Geom.  Funct. Anal.}, (3)24 (2014), 740--747 

 \bibitem [DyZw]{dyzw} S. Dyatlov and M. Zworski. The mathematical theory of scattering resonances. \\ http://math.mit.edu/$\sim$dyatlov/res/res\_20170323.pdf
 
\bibitem[Ev]{ev}
L. C. Evans. Partial differential equations, 2nd ed., Graduate studies in mathematics 19, AMS, Providence (2010) 

\bibitem[Ga17]{ga17} O. Gannot. Resolvent estimates for spacetimes bounded by killing horizons. arXiv:1705.04251

\bibitem [HiZw17]{hizw17} P. Hintz and M. Zworski. Wave decay for star-shaped obstacles in $\mathbb{R}^3$: papers of Morawetz and Ralston revisited. arXiv:1705.01215

\bibitem[HoSm14]{hosm14} G. Holzegel and J. Smulevici. Quasimodes and a lower bound on the uniform energy decay for Kerr-Ads spacetimes. \textit{Anal. PDE} (5)7 (2014), 1057-1088  

\bibitem[KlVo18]{klvo18} F. Klopp and M. Vogel. On resolvent estimates and resonance free regions for semiclassical Schr\"odinger operators with bounded potentials. arXiv: 1803.02450

\bibitem[Mo16]{mo16} G. Moschidis. Logarithmic local energy decay for scalar waves on a general class of asymptotically flat spacetimes. \textit{Ann. PDE} (1)2 (2016), 124 pp  

\bibitem[OrSu12]{orsu12} C. Ortner and E. Suli. A note on linear elliptic systems in $\mathbb{R}^d$.  arXiv:1202.3970 

\bibitem[PoVo99]{povo99}
G. Popov and G. Vodev. Distribution of the resonances and local energy decay in the transmission problem. \textit{Asymptot. Anal.}, (3-4)19 (1999), 253--265

\bibitem[Ra69]{ra69}
J. Ralston. Solutions of the wave equation with localized energy. \textit{Comm. Pure Appl. Math}, (6)22 (1969), 807-823

\bibitem[Ra71]{ra71}
J. Ralston. Trapped rays in spherically symmetric media and poles of the scattering matrix. \textit{Comm. Pure Appl. Math}, (4)24 (1971), 571--582


\bibitem[Sh16]{s}
J. Shapiro. Semiclassical resolvent bounds in dimension two. To appear in \textit{Proc. Amer. Math. Soc.} Prepint available at arXiv: 1604.03852

\bibitem[Sh18a]{sh18}
J. Shapiro. Semiclassical resolvent bound for compactly supported $L^\infty$ potentials. Submitted, arXiv: 1802.09008

\bibitem[Sh18b]{sh18b}
J. Shapiro. Semiclassical resolvent estimates and wave decay in low regularity. PhD thesis available at \\
https://jzshapiro.weebly.com/research.html

\bibitem[Sj]{sj}
J. Sj\"ostrand. Lectures on resonances.
sjostrand.perso.math.cnrs.fr/Coursgbg.pdf

\bibitem[SjZw91]{sjzw91}
J. Sj\"ostrand and M. Zworski. Complex Scaling and the Distribution of the Scattering Poles. \textit{J. Amer. Math. Soc.} (4)4 (1991), 729--769


\bibitem[Vo01]{vod01} G. Vodev. Resonances in the Euclidean scattering. \textit{Cubo Matem\'atica Educacional} (1)3 (2001), 317--360 

\bibitem[Vo14]{vod14} G. Vodev. Semi-classical resolvent estimates and regions free of resonances. \textit{Math. Nachr.} (7)287 (2014), 825--835

\end{thebibliography}
\end{document}